     \def\section{\@startsection{section}{1}%
     \z@{.7\linespacing\@plus\linespacing}{.5\linespacing}%
     {\bfseries
     \centering
     }}
     \def\@secnumfont{\bfseries}
\newtheorem{theorem}{Theorem}[section]
\newtheorem{lemma}[theorem]{Lemma}
\theoremstyle{definition}
\theoremstyle{remark}
\newtheorem{remark}[theorem]{Remark}
\numberwithin{equation}{section}
\newcommand{\be}{\begin{equation}}
\newcommand{\ee}{\end{equation}}
\newcommand{\bea}{\begin{eqnarray}}
\newcommand{\eea}{\end{eqnarray}}
\newcommand{\bean}{\begin{eqnarray*}}
\newcommand{\eean}{\end{eqnarray*}}
\newcommand{\brray}{\begin{array}}
\newcommand{\erray}{\end{array}}
\newcommand{\ben}{\begin{equation}{nonumber}}
\newcommand{\een}{\end{equation}{nonumber}}
\newtheorem{dfn}{Definition}[section]
\newtheorem{thm}[dfn]{Theorem}
\newtheorem{lema}[dfn]{Lemma}
\newtheorem{pro}[dfn]{Proposition}
\newtheorem{coro}[dfn]{Corollary}
\newtheorem{xmpl}[dfn]{Example}
\newtheorem{rmrk}[dfn]{Remark}
\newcommand{\bdfn}{\begin{dfn}}
\newcommand{\bthm}{\begin{thm}}
\newcommand{\blema}{\begin{lema}}
\newcommand{\bpro}{\begin{pro}}
\newcommand{\bcoro}{\begin{coro}}
\newcommand{\bxmpl}{\begin{xmpl}}
\newcommand{\brmrk}{\begin{rmrk}}
\newcommand{\edfn}{\end{dfn}}
\newcommand{\ethm}{\end{thm}}
\newcommand{\elema}{\end{lema}}
\newcommand{\epro}{\end{pro}}
\newcommand{\ecoro}{\end{coro}}
\newcommand{\exmpl}{\end{xmpl}}
\newcommand{\ermrk}{\end{rmrk}}
\newcommand{\half}{\frac{1}{2}}
\newcommand{\B}{\mathcal B}
\numberwithin{equation}{section}
\begin{document}

\title[Unitary Processes with Independent Increments]
{Characterization of Unitary Processes \\ \medskip with  Independent Increments}

\author[Un Cig Ji]{Un Cig Ji}
\thanks{  The first author is partially supported by the Korea Science and
Engineering Foundation (KOSEF) under a grant  by the Government of
Republic of  Korea  (MEST) (No. R01-2008-000-10843-0) while other
two authors  acknowledge  support by the UK- India Education and
Research  Initiative (UKIERI)  project  RA2007. The third author
would also like to thanks   CSIR, Government of India for partial
support   through Bhatnagar Fellowship.
 }
\address{Un Cig Ji: Department of Mathematics, Research Institute of Mathematical Finance,
Chungbuk National University, Cheongju 361-763, Korea}
\email{uncigji@chungbuk.ac.kr}

\author[Lingaraj Sahu]{Lingaraj Sahu}
\address{Lingaraj Sahu: Indian Institute of Science Education and Research (IISER) Mohali,
MGSIPAP Complex Sector 26, Chandigarh -16, India}
\email{lingaraj@iisermohali.ac.in}

\author[Kalyan B. Sinha]{Kalyan B. Sinha}
\address{Kalyan B. Sinha: Jawaharlal Nehru Centre for Advanced Scientific Research, Jakkur, Bangalore-64, India;
  Department of Mathematics, Indian Institute of Science, Bangalore-12, India}
\email{kbs\_jaya@yahoo.co.in}

\subjclass[2000]{60G51; 81S25}

\keywords{Gaussian unitary  processes, independent increment}

\begin{abstract}
In this paper, we study unitary Gaussian processes with independent
increments with  which the unitary equivalence to a
Hudson-Parthasarathy evolution systems is proved. This  gives a
generalization of results in \cite{SSS} and \cite{SS} in the absence
of the stationarity condition.

\vskip1.5cm
\begin{center}
\textit{Dedicated to Robin L. Hudson on his 70th birthday}
\end{center}
\end{abstract}

\maketitle
\section{Introduction}

In the framework of the theory of quantum stochastic calculus
developed by the  work of Hudson and Parthasarathy \cite{hp1}, HP-
quantum stochastic differential equations (qsde)
 \be
\label{hpeqn00} dV_t=\sum_{\mu,\nu\ge 0}  V_t L_\nu^\mu(t)
\Lambda_\mu^\nu(dt),~ V_0=1_{\mathbf h \otimes \Gamma}, \ee (where
the coefficients $L_\nu^\mu(t)~: \mu,~\nu~\ge 0$ are operators in
the initial Hilbert space  $\mathbf h$ for almost every $t\ge 0$ and
$\Lambda_\mu^\nu$ are fundamental processes in the symmetric Fock
space  $\Gamma= \Gamma_{sym}(L^2(\mathbb R_+, \mathbf k))$ with
respect to  a fixed orthonormal basis  (in short `ONB') $\{E_j: j\ge
1\}$ of  the noise Hilbert space $\mathbf k$ ) have  been
formulated.  The conditions for existence and uniqueness  of a
solution $\{V_t\}$ were  studied by Hudson and Parthasarathy  and
many other authors. In particular when the coefficient operators
$\{L_\nu^\mu(t)~:t\ge 0, \mu,~\nu~\ge 0\}$ are locally  essentially
bounded in $\mathcal B(\mathbf h)$  satisfying unitarity  conditions
it is observed that the solution  $ \{V_t:t\ge 0\}$ is a unitary
process.

In particular, in the absence of the conservation martingale, the
equation  take the form $dV_t= \sum_{j}\{V_t L_j(t)
a^\dagger(dt)-V_t L_j^*(t) a(dt)\}+V_t G(t) dt$ with the unitary
condition $\sum_{j} L_j^* (t)  L_j(t)+2 Re~G(t)=0$  for almost every
$t\ge 0$ (Ref.\cite{gs,hp1}).

In a series of earlier work (Ref.\cite{SSS,SS}) it has been shown
that unitary evolution on $\mathbf h\otimes  \mathcal H$  with
stationary, independent increments and a Gaussian condition (where
$\mathbf h  $ and $\mathcal H$ are separable  Hilbert spaces) with
bounded  or  possibly unbounded generator  ( in the second case, one
needs some further conditions ) are unitarily isomorphic to solution
of qsde of the type (\ref{hpeqn00}) with time independent
coefficients.

In this  article  we are  interested in the characterization of
unitary evolutions with only   independent increments   on $\mathbf
h \otimes \mathcal H$  and with the assumption that the expectation
evolution relative to a distinguished  vector in $\mathcal H$  is
Lifshitz in the time  variable.

The  article is organized as follows: Section 2  is  meant  for
recalling  some preliminary ideas  and fixing  some  notations on
 linear  operators  on Hilbert spaces and  Section 3 collects some results associated with Hilbert  space and  properties  of evolutions.
 The main results of  section 3 are proved in the Appendix.
Section 3 also contain the description of  the  unitary  processes
with independent increments and the assumptions on them.
 Section  4 is dedicated to  the construction of a  Hilbert  space, called the  noise space   and  operator coefficients  associated with
 them.  The HP evolution system and its minimality are discussed in
 Section 5  and consequently  the unitary  equivalence of the solution
 with the unitary process is proven.

\section{Notation and Preliminaries}
\label{sec:notation and preliminaries}

We assume that all Hilbert spaces in this article are complex separable
with inner products which are anti-linear in the first variable.
For each Hilbert spaces $\mathcal{H}$ and $\mathcal{K}$
we denote the Banach spaces of all bounded linear operators from $\mathcal{H}$ to $\mathcal{K}$
and all trace class operators on $\mathcal{H}$ by $ \mathcal{B}(\mathcal{H},\mathcal{K})$ and
$\mathcal{B}_1(\mathcal{H})$, respectively, and the trace on $\mathcal{B}_1(\mathcal{H})$ by $\mathrm{Tr}(\cdot)$.
We note that for each $\xi\in\mathcal{H}\otimes\mathcal{K}$ and $h\in\mathcal{H}$,
there exists a unique vector $\langle\!\langle h,\xi\rangle\!\rangle$ in $\mathcal{K}$ such that
\begin{equation}\label{partinn}
\langle~\langle\!\langle h,\xi\rangle\!\rangle,~k\rangle
 =\langle\xi, h\otimes k\rangle,\forall k\in\mathcal K.
\end{equation}
In other words, $\langle\!\langle h,\xi \rangle\!\rangle=F_{h}^*\xi$,
where $F_{h}\in\mathcal{B}(\mathcal{K},\mathcal{H}\otimes\mathcal{K})$
is given by $F_{h}k=h\otimes k$.

Let $\mathbf{h}$ and $\mathcal{H}$ be two Hilbert spaces with orthonormal bases
$\{e_j:j\ge 1\}$ and $\{\zeta_j:j\ge 1\}$, respectively.
For each $A\in\mathcal{B}(\mathbf{h}\otimes\mathcal{H})$ and $u,v\in\mathbf{h}$
we define a linear operator $A(u,v)\in \mathcal{B}(\mathcal{H})$ by
\[
\langle \xi_1,A(u,v) \xi_2\rangle
 =\langle u\otimes \xi_1 ,A ~v\otimes \xi_2 \rangle,\forall \xi_1,\xi_2\in\mathcal{H}
\]
and read off the following properties (for the proof, see Lemma 2.1 in  \cite{SSS}):
\begin{lemma}\label{Auv}
Let $A,B\in\mathcal{B}(\mathbf{h}\otimes\mathcal{H})$. Then for any $u,v,u_i,v_i\in\mathbf{h}$ $(i=1,2)$ we have
\begin{itemize}
 \item  [(i)] $A(\cdot,\cdot):\mathbf{h}\times\mathbf{h}\mapsto\mathcal{B}(\mathcal{H})$
 is a separately continuous sesqui-linear map, and if
  $A(u,v)=B(u,v)$ for all $u,v\in\mathbf{h}$, then $A=B$,
 \item  [(ii)] $\|A(u,v)\|\le \|A\| \|u\| \|v\|$ and  $A(u,v)^*=A^*(v,u),$
 \item  [(iii)]  $A(u_1,v_1)B(u_2,v_2)=\left[A\left(|v_1><u_2|\otimes 1_\mathcal{H}\right)B\right](u_1,v_2),$
 \item  [(iv)] $AB(u,v)=\sum_{j \ge 1} A(u,e_j) B(e_j,v)$, where the series converges strongly,
 \item  [(v)]  $0\le A(u,v)^* A(u,v) \le \|u\|^2 A^*A(v,v)$,
 \item  [(vi)] for any $\xi_1,\xi_2\in\mathcal{H}$ we have
\begin{align*}
 \langle A(u_1,v_1)\xi_1,B(u_2,v_2)\xi_2\rangle
 &=\sum_{j\ge 1}\langle u_2\otimes \zeta_j,\left[B(|v_2><v_1|\otimes |\xi_2><\xi_1|)A^*\right]u_1\otimes \zeta_j\rangle\\
 &=\langle v_1\otimes \xi_1,\left[A^*(|u_1><u_2|\otimes 1_{\mathcal H})B\right]v_2 \otimes \xi_2 \rangle.
 \end{align*}
\end{itemize}
\end{lemma}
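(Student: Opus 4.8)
The statement to prove is Lemma~\ref{Auv}, parts (i)--(vi), characterizing the "slicing" operators $A(u,v)$. Since the paper explicitly cites Lemma 2.1 of \cite{SSS} for the proof, my plan is to reconstruct that proof from scratch.

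\medskip

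\textbf{Proof plan for Lemma~\ref{Auv}.}
The basic tool throughout is the defining relation $\langle \xi_1, A(u,v)\xi_2\rangle = \langle u\otimes\xi_1, A\, v\otimes\xi_2\rangle$, together with the observation that $A(u,v)$ can be written compactly as $A(u,v) = F_u^* A F_v$, where $F_h \in \mathcal{B}(\mathcal{H}, \mathbf{h}\otimes\mathcal{H})$ is the isometry-like map $F_h k = h\otimes k$ introduced just before the lemma (so $\|F_h\| = \|h\|$ and $F_h^* F_{h'} = \langle h, h'\rangle 1_{\mathcal H}$). Almost every part follows by unwinding this identity.

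\medskip

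For (i), sesquilinearity in $u$ and $v$ is immediate from sesquilinearity of the inner product on $\mathbf{h}\otimes\mathcal{H}$; separate continuity follows from the bound in (ii); and if $A(u,v) = B(u,v)$ for all $u,v$ then $\langle u\otimes\xi_1, (A-B) v\otimes\xi_2\rangle = 0$ for all $u,v,\xi_1,\xi_2$, and since vectors of the form $v\otimes\xi_2$ are total in $\mathbf{h}\otimes\mathcal{H}$, we get $A = B$. For (ii), $\|A(u,v)\| = \|F_u^* A F_v\| \le \|F_u\|\,\|A\|\,\|F_v\| = \|u\|\,\|v\|\,\|A\|$, and $A(u,v)^* = (F_u^* A F_v)^* = F_v^* A^* F_u = A^*(v,u)$. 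For (iii), compute $A(u_1,v_1)B(u_2,v_2) = F_{u_1}^* A F_{v_1} F_{u_2}^* B F_{v_2}$ and note that $F_{v_1} F_{u_2}^* = |v_1\rangle\langle u_2| \otimes 1_{\mathcal H}$ as an operator on $\mathbf{h}\otimes\mathcal{H}$ (check on simple tensors: $F_{v_1}F_{u_2}^*(w\otimes k) = F_{v_1}(\langle u_2,w\rangle k) = \langle u_2,w\rangle v_1\otimes k$); hence the product equals $F_{u_1}^*\big[A(|v_1\rangle\langle u_2|\otimes 1_{\mathcal H})B\big]F_{v_2} = [A(|v_1\rangle\langle u_2|\otimes 1_{\mathcal H})B](u_1,v_2)$. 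Part (iv) is the special case of inserting a resolution of the identity: $\sum_{j} F_{e_j} F_{e_j}^* = \sum_j |e_j\rangle\langle e_j|\otimes 1_{\mathcal H} = 1_{\mathbf h\otimes\mathcal H}$ strongly, so $AB(u,v) = F_u^* AB F_v = \sum_j F_u^* A F_{e_j} F_{e_j}^* B F_v = \sum_j A(u,e_j)B(e_j,v)$ with strong convergence (one must check the partial sums converge strongly, which follows from strong convergence of $\sum_j F_{e_j}F_{e_j}^*$ applied to the vector $BF_v\xi$ and continuity of $F_u^* A$).

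\medskip

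For (v), the middle term $A(u,v)^*A(u,v) = F_v^* A^* F_u F_u^* A F_v$. Since $F_u F_u^* = |u\rangle\langle u|\otimes 1_{\mathcal H} \le \|u\|^2\, 1_{\mathbf h\otimes\mathcal H}$ as a positive operator, we sandwich: $0 \le F_v^* A^* F_u F_u^* A F_v \le \|u\|^2 F_v^* A^* A F_v = \|u\|^2 A^*A(v,v)$. Finally (vi) is obtained by expanding $\langle A(u_1,v_1)\xi_1, B(u_2,v_2)\xi_2\rangle = \langle u_1\otimes\xi_1, A^* B(u_2,v_2)\xi_2\rangle$... more carefully, write it as $\langle F_{u_1}^* A F_{v_1}\xi_1, F_{u_2}^* B F_{v_2}\xi_2\rangle = \langle A F_{v_1}\xi_1, F_{u_1}F_{u_2}^* B F_{v_2}\xi_2\rangle = \langle F_{v_1}\xi_1, A^* F_{u_1}F_{u_2}^* B F_{v_2}\xi_2\rangle = \langle v_1\otimes\xi_1, [A^*(|u_1\rangle\langle u_2|\otimes 1_{\mathcal H})B] v_2\otimes\xi_2\rangle$, which is the second displayed equality; the first displayed equality then comes from inserting the ONB $\{\zeta_j\}$ of $\mathcal{H}$ to rewrite the inner product as a trace, using that $\langle v_1\otimes\xi_1, X\, v_2\otimes\xi_2\rangle = \mathrm{Tr}\big[(|v_2\rangle\langle v_1|\otimes|\xi_2\rangle\langle\xi_1|) X^*\big]^{\!-}$ type manipulations together with (ii)--(iii).

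\medskip

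I do not anticipate a serious obstacle here: the lemma is essentially bookkeeping built on the single identity $A(u,v) = F_u^* A F_v$ and the two facts $F_h^* F_{h'} = \langle h, h'\rangle 1_{\mathcal H}$ and $F_h F_{h'}^* = |h\rangle\langle h'|\otimes 1_{\mathcal H}$. The only point requiring a little care is the strong (rather than norm) convergence claim in (iv) and the corresponding interchange of sum and inner product in (vi); both are handled by noting the relevant partial sums are $F_u^* A P_N B F_v$ with $P_N = \sum_{j\le N} F_{e_j}F_{e_j}^* \uparrow 1$ strongly and bounded by $1$, so strong convergence is inherited after composing with the bounded operators $F_u^* A$ on the left and applying to a fixed vector on the right.
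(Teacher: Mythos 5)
Your proposal is correct and follows essentially the intended route: the paper defers the proof to Lemma 2.1 of \cite{SSS}, but the operators $F_h$ it introduces just before the statement are exactly the tool you use, and all six parts do reduce to the identities $A(u,v)=F_u^*AF_v$, $F_h^*F_{h'}=\langle h,h'\rangle 1_{\mathcal H}$ and $F_hF_{h'}^*=|h\rangle\langle h'|\otimes 1_{\mathcal H}$ as you indicate. The only place you leave compressed, the first equality in (vi), is indeed routine: the sum over $\zeta_j$ equals $\mathrm{Tr}\bigl[B(u_2,v_2)\,|\xi_2\rangle\langle\xi_1|\,A(u_1,v_1)^*\bigr]=\langle A(u_1,v_1)\xi_1,B(u_2,v_2)\xi_2\rangle$, the trace being well defined since the operator has rank one.
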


For each $A\in\mathcal{B}(\mathbf{h}\otimes\mathcal{H})$ and $\epsilon\in\mathbb{Z}_2=\{0,1\}$,
we define an operator $A^{(\epsilon)}\in\mathcal{B}(\mathbf{h}\otimes\mathcal{H})$ by
\[
A^{(\epsilon)}:=\left\{
                  \begin{array}{ll}
                    A & \hbox{if}~~ \epsilon=0, \\
                    A^* & \hbox{if}~~ \epsilon=1.
                  \end{array}
                \right.
\]
For $1\le k\le n,$ we define a unitary exchanging map
$P_{k,n}:\mathbf{h}^{\otimes n} \otimes \mathcal{H}\rightarrow\mathbf{h}^{\otimes n}\otimes\mathcal{H}$ by
\[
P_{k,n}(u_1\otimes\cdots\otimes u_n\otimes\xi)
 :=u_{\tau_{k,n}(1)}\otimes\cdots\otimes u_{\tau_{k,n}(n)}\otimes\xi
\]
on product vectors, where $\tau_{k,n}:=\left(k~k+1~\cdots~n\right)$
is a permutation on $\{1,2,\cdots,n\}$.
Let $\underline{\epsilon}=(\epsilon_1,\epsilon_2,\cdots,\epsilon_n)\in \mathbb{Z}_2^n$.
Consider the ampliation of the operator $A^{(\epsilon_k)}$ in $\mathcal B(\mathbf h^{\otimes n}
\otimes \mathcal H)$ given by
\[
A^{(n,\epsilon_k)}:=P_{k,n}^* (1_{\mathbf h^{\otimes n-1}} \otimes A^{(\epsilon_k)})P_{k,n}.
\]
Now we define the operator
\[
A^{(\underline{\epsilon})}
 :=\prod_{k=1}^n~A^{(n,\epsilon_k)}:=A^{(n,\epsilon_1)}\cdots A^{(n,\epsilon_n)}
\]
as in $\mathcal{B}(\mathbf{h}^{\otimes n} \otimes\mathcal{H})$.
Note that as here, through out  this article, the product symbol $\prod_{k=1}^n$
stands for  product with the ordering from $1$ to $n$.
For product vectors  $\underbar{u}, \underbar{v}\in\mathbf{h}^{\otimes n}$
one can see that
\begin{equation}\label{A-(epsilon)}
A^{(\underline{\epsilon})}(\underbar{u},\underbar{v})
 =\left(\prod_{i=1}^n A^{(n,\epsilon_i)}\right)(\underbar{u},\underbar{v})
 =\prod_{i=1}^n A^{(\epsilon_i)}(u_i,v_i)\in\mathcal{B}(\mathcal{H}),
\end{equation}
moreover, for $1\le m\le n$, we see that
\begin{equation}\label{Akn}
\left(\prod_{i=1}^m A^{(n,\epsilon_i)}\right)(\underbar{u},\underbar{v})
 =\prod_{i=1}^m A^{(\epsilon_i)}(u_i,v_i)   \prod_{i=m+1}^n \langle u_i,v_i\rangle
      \in  \mathcal B(\mathcal H).
\end{equation}
When $\underline{\epsilon}=\underline{0}\in\mathbb{Z}_2^n$,
for simplicity we shall write  $A^{(n,k)}$ for $A^{(n,\epsilon_k)}$
and $A^{(n)}$ for $A^{(\underline{\epsilon})}$.
%

\section {Unitary Processes with Independent Increments}

Let $\{U_{s,t}:0\le s\le t<\infty\}$ be a family of unitary
operators in $\mathcal{B}(\mathbf{h}\otimes\mathcal{H})$ with
$U_{s,s}=1$ for any $s\ge0$ and $\Omega $ be a fixed unit vector in $\mathcal{H}$.
Let us consider the family of unitary operators
$\{U_{s,t}^{(\epsilon)}\}$ in $ \mathcal{B}(\mathbf{h}\otimes \mathcal{H})$
for $\epsilon\in\mathbb{Z}_2$ given by
$ U_{s,t}^{(0)}=U_{s,t}$ and $U_{s,t}^{(1)}= U_{s,t}^*$.
As in Section \ref{sec:notation and preliminaries},
for fixed $n\ge 1$, $\underline{\epsilon}\in\mathbb{Z}_2^n$ and each $1\le k\le n$,
we define the families of operators $\{U_{s,t}^{(n,\epsilon_k)}\}$ and
$\{U_{s,t}^{(\underline{\epsilon})}\}$ in $\mathcal{B}(\mathbf{h}^{\otimes n}\otimes\mathcal{H})$.
By identity \eqref{A-(epsilon)}, for product vectors $\underbar{u},\underbar{v}\in \mathbf{h}^{\otimes n}$
and $\underline{\epsilon}\in\mathbb{Z}_2^n$, we have
\[
U_{s,t}^{(\underline{\epsilon})}(\underbar{u},\underbar{v})
=\prod_{i=1}^n U_{s,t}^{(\epsilon_i)}(u_i,v_i).
\]
Furthermore, for $\underbar{s}=(s_1,s_2,\cdots,s_n)$,
$\underbar{t}=(t_1,t_2,\cdots,t_n)$ such that
$0\le s_1\le t_1\le s_2\le\ldots \le s_n\le t_n<\infty$, we define
$U_{\underbar{s},\underbar{t}}^{(\underline{\epsilon})}\in\mathcal{B}(\mathbf{h}^{\otimes n}\otimes\mathcal{H})$
by setting
\begin{equation}\label{U-underbar-st}
U_{\underbar{s},\underbar{t}}^{(\underline{\epsilon})}:=\prod_{k=1}^n
U_{s_k,t_k}^{(n,\epsilon_k)}.
\end{equation}
Then for $\underbar{u}=\otimes_{k=1}^n u_k,
\underbar{v}=\otimes_{k=1}^n v_k\in \mathbf h^{\otimes n}$ we have
\[
U_{\underbar{s},\underbar{t}}^{(\underline{\epsilon})}(\underbar{u},\underbar{v})
 =\prod_{k=1}^n U_{s_k,t_k}^{(\epsilon_k)}(u_k,v_k).
\]
When $\underline{\epsilon}=\underline{0}$, we write
$U_{\underbar{s},\underbar{t}}$ for $U_{\underbar{s},\underbar{t}}^{(\underline{\epsilon})}$.
For $\alpha,\beta\ge0$ and $\underbar{s}=(s_1,s_2, \cdots, s_n)$,
$\underbar{t}=(t_1,t_2, \cdots, t_n)$, we write $\alpha\le\underbar{s},\underbar{t}\le \beta$
if $ \alpha\le s_1\le t_1\le s_2 \le \ldots \le s_n\le  t_n\le \beta$.

\bigskip
We assume the following on the family of unitary
$\{U_{s,t}\in\mathcal{B}(\mathbf{h}\otimes\mathcal{H})\}$.

\bigskip
\noindent
\textbf{Assumption A:}
\begin{itemize}
  \item[\textbf{(A1)}] \textbf{(Evolution}) for any $0\le r\le s\le t<\infty, ~
  U_{r,s}U_{s,t}=U_{r,t}$  and $U_{s,s}=1,$
  \item[\textbf{(A2)}] \textbf{(Independence of increments)} for any $0\le s_i\le t_i<\infty$ ($i=1,2$)
                       such that $[s_1,t_1)\cap[s_2,t_2)=\emptyset$,
\begin{itemize}
  \item [(i)] $U_{s_1,t_1}(u_1,v_1)$ commutes with $U_{s_2,t_2}(u_2,v_2)$ and $U_{s_2,t_2}^*(u_2,v_2) $
              for any $u_i,v_i \in \mathbf{h}$ ($i=1,2$).
  \item [(ii)] for any $s_1\le \underbar{q},\underbar{r}\le t_1$, $s_2\le\underbar{s},\underbar{t}\le t_2$
               and $\underbar{u},\underbar{v}\in\mathbf h^{\otimes n}$, $\underbar{w},\underbar{z}\in\mathbf{h}^{\otimes m}$
               and $\underline{\epsilon}\in \mathbb{Z}_2^n$, $\underline{\epsilon}^\prime \in \mathbb{Z}_2^m$,
\[
\langle \Omega,U_{\underbar{q},\underbar{r}}^{(\underline{\epsilon})}(\underbar{u},\underbar{v})
  U_{\underbar{s},\underbar{t}}^{(\underline{\epsilon}^\prime)}(\underbar{p},\underbar{w})\Omega\rangle
=\langle\Omega,U_{\underbar{q},\underbar{r}}^{(\underline{\epsilon})}(\underbar{u},\underbar{v})\Omega\rangle
 \langle\Omega,U_{\underbar{s},\underbar{t}}^{(\underline{\epsilon}^\prime)}(\underbar{p},\underbar{w})\Omega\rangle.
\]
\end{itemize}
\end{itemize}

\medskip
\noindent \textbf{Assumption B:~ (Regularity)}\enspace for any
$\infty
>t\ge s\ge0$,
\[
\sup\left\{\left|\left\langle\Omega,(U_{s,t}-1)(u,v)\Omega\right\rangle\right|\,:\,
\|u\|=\|v\|=1\right\}
 \le  C  |t-s|\]  for some  positive  constant   $C$ independent of  $s,t.$

\begin{remark}
Unlike \cite{SSS, SS}, in the \textbf{Assumption  A}, the stationarity condition is not assumed.
\end{remark}

As in \cite{SSS,SS}, we need further assumptions for Gaussianity and minimality:

\medskip
\noindent \textbf{Assumption C:~ (Gaussianity)}\enspace for each
$t\ge s\ge0$ and any $u_k,v_k\in \mathbf h$, $\epsilon_k\in
\mathbb{Z}_2$ ($k=1,2,3$),
\begin{equation}\label{eqn:Gaussian condition}
\lim_{t \downarrow s}\frac{1}{t-s}
 \left\langle\Omega,\left(\prod_{k=1}^3 (U_{s,t}^{(\epsilon_k)}-1)(u_k,v_k)\right)\Omega\right\rangle=0.
\end{equation}

\medskip
\noindent \textbf{Assumption D:~ (Minimality)}\enspace the set
\[
 \mathcal{S}_0
 =\left\{U_{\underbar{s},\underbar{t}}(\underbar{u},\underbar{v})\Omega\,:\,
 \begin{array}{l}
   \underbar{s}=(s_1,s_2, \cdots, s_n),~\underbar{t}=(t_1,t_2,\cdots, t_n)~\textrm{with}~0\le \underbar{s},\underbar{t}<\infty,\\
   \underbar{u}=\otimes_{k=1}^nu_k,\underbar{v}=\otimes_{k=1}^n v_k\in \mathbf{h},~~ n\ge1
 \end{array}
 \right\}
\]
is total in $\mathcal{H}$.
%
%

\begin{remark}
The \textbf{Assumption D} is not really a restriction, one can as
well work by  replacing $\mathcal{H}$ by the closure of the linear
span of $\mathcal{S}_0$.
\end{remark}
%

\subsection{Vacuum Expectation}

Let us look at the various evolutions associated with the
$\{U_{s,t}\}.$ Define a two parameter family of operators
$\{T_{s,t}\}$ on $\mathbf h $ by
\[
\left\langle u,T_{s,t}v\right\rangle
 :=\left\langle \Omega, U_{s,t}(u,v)\Omega\right\rangle,\qquad\forall u,v \in\mathbf{h}.
\]
For each $t\ge s\ge 0$, since $U_{s,t}$ is unitary, $T_{s,t}$ is a
contractions.

\brmrk The  \textbf{Assumption B} implies  $\| T_{s,t}-1\|\le C
|t-s|.$ In particular \\
$\lim_{t\downarrow s}  T_{s,t}=1$  uniformly
in $s.$\ermrk

\begin{lemma}
Under the \textbf{Assumptions A }  and {\bf  B,} the family
$\{T_{s,t}\}$ of contractions satisfies
\begin{itemize}
  \item [(i)] for any $r\le s\le t<\infty$, $T_{r,s}
  T_{s,t}=T_{r,t}$  and $T_{s,s}=1_{\mathbf h}$
 \item [(ii)]  for any   $  t'\ge t\ge s \ge 0,  \|
 T_{s,t^\prime}-T_{s,t}\|\le C|t'-t|.$
\end{itemize}
\end{lemma}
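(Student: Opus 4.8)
The plan is to derive both assertions directly from the cocycle/evolution property of $\{U_{s,t}\}$ together with the norm estimate already recorded in the Remark, namely $\|T_{s,t}-1\|\le C|t-s|$.

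For part (i), I would start from the evolution identity (A1), $U_{r,s}U_{s,t}=U_{r,t}$ for $r\le s\le t$. Fix $u,v\in\mathbf h$. Using Lemma \ref{Auv}(iv) applied to the product $U_{r,s}U_{s,t}$ (with the ONB $\{e_j\}$ of $\mathbf h$), we get $U_{r,t}(u,v)=\sum_{j\ge1}U_{r,s}(u,e_j)U_{s,t}(e_j,v)$, the series converging strongly. Taking the vacuum matrix element $\langle\Omega,\cdot\,\Omega\rangle$, I would like to write
\[
\langle u,T_{r,t}v\rangle=\langle\Omega,U_{r,t}(u,v)\Omega\rangle
 =\sum_{j\ge1}\langle\Omega,U_{r,s}(u,e_j)U_{s,t}(e_j,v)\Omega\rangle,
\]
and then, since $[r,s)\cap[s,t)=\emptyset$, invoke the independence of increments, specifically (A2)(ii) with $n=m=1$ and $\underline{\epsilon}=\underline{\epsilon}'=0$, to factor each term as $\langle\Omega,U_{r,s}(u,e_j)\Omega\rangle\langle\Omega,U_{s,t}(e_j,v)\Omega\rangle=\langle u,T_{r,s}e_j\rangle\langle e_j,T_{s,t}v\rangle$. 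Summing over $j$ and using Parseval gives $\langle u,T_{r,t}v\rangle=\langle u,T_{r,s}T_{s,t}v\rangle$, hence $T_{r,t}=T_{r,s}T_{s,t}$; and $T_{s,s}=1_{\mathbf h}$ is immediate from $U_{s,s}=1$. The one technical point to be careful about is justifying the interchange of $\langle\Omega,\cdot\,\Omega\rangle$ with the strongly convergent sum and the application of (A2)(ii) termwise — this is routine since $\sum_j\|U_{r,s}(u,e_j)\|^2<\infty$ and $\sum_j\|U_{s,t}(e_j,v)\|^2<\infty$ by Lemma \ref{Auv}(v) applied to the unitaries, so the sum of products is absolutely convergent by Cauchy–Schwarz.

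For part (ii), let $t'\ge t\ge s\ge0$. By part (i), $T_{s,t'}=T_{s,t}T_{t,t'}$, so
\[
T_{s,t'}-T_{s,t}=T_{s,t}\bigl(T_{t,t'}-1_{\mathbf h}\bigr).
\]
Since $T_{s,t}$ is a contraction, $\|T_{s,t'}-T_{s,t}\|\le\|T_{t,t'}-1_{\mathbf h}\|$, and the Remark following the definition of $\{T_{s,t}\}$ (a consequence of Assumption B) gives $\|T_{t,t'}-1_{\mathbf h}\|\le C|t'-t|$. This yields the claim.

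I expect the main obstacle to be purely bookkeeping in part (i): making the passage from the strong convergence of $\sum_j U_{r,s}(u,e_j)U_{s,t}(e_j,v)$ to the scalar identity fully rigorous, i.e. checking that the vacuum expectation can be taken inside the sum and that (A2)(ii) legitimately applies to each summand (it does, since the two time intervals are disjoint). Everything else — the contraction property of $T_{s,t}$, the telescoping in (ii), and the invocation of Assumption B — is immediate.
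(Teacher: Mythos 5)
Your overall approach to part (i)---using Lemma~\ref{Auv}(iv) to expand $U_{r,t}(u,v)=\sum_j U_{r,s}(u,e_j)U_{s,t}(e_j,v)$, taking the vacuum expectation, invoking \textbf{(A2)(ii)} termwise, and finishing by Parseval---is exactly the right way to fill in what the paper dispatches in one sentence, and your part (ii) coincides with the paper's proof. However, the technical justification you offer for the interchange is incorrect: the claim that $\sum_j\|U_{r,s}(u,e_j)\|^2<\infty$ (and likewise for $U_{s,t}$) does \emph{not} follow from Lemma~\ref{Auv}(v), and is in fact false in general. Lemma~\ref{Auv}(v) only yields the uniform bound $\|U_{r,s}(u,e_j)\|\le\|u\|$; the operator norms need not be square-summable over $j$ (e.g.\ for the flip unitary on $\mathbf{h}\otimes\mathbf{h}$ one gets $\|U(u,e_j)\|$ bounded below uniformly in $j$). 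Fortunately, the correct justification is simpler and does not require any such estimate on the $U$ side: Lemma~\ref{Auv}(iv) gives \emph{strong} operator convergence, so applying the partial sums to $\Omega$ gives a norm-convergent sequence in $\mathcal H$, and pairing with $\Omega$ is continuous, which already legitimises interchanging $\langle\Omega,\cdot\,\Omega\rangle$ with the series. The place where Cauchy--Schwarz and square-summability rightfully enter is one level down, on the $T$'s: after \textbf{(A2)(ii)} the terms become $\langle u,T_{r,s}e_j\rangle\langle e_j,T_{s,t}v\rangle$, and $\sum_j|\langle T_{r,s}^*u,e_j\rangle|^2=\|T_{r,s}^*u\|^2<\infty$, $\sum_j|\langle e_j,T_{s,t}v\rangle|^2=\|T_{s,t}v\|^2<\infty$, so that the sum converges absolutely to $\langle T_{r,s}^*u,T_{s,t}v\rangle=\langle u,T_{r,s}T_{s,t}v\rangle$. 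With that repair, your proof is complete and matches the argument the paper leaves implicit.
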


\begin{proof}
(i)  The evolution and independent increment property of
$\{U_{s,t}\}$ and the  definition of $T_{s,t}$ gives the result.\\
 (ii) By (i), for a fixed  $s\ge 0$ and any $t^\prime\ge
t\ge s$, we have
\[
\| T_{s,t^\prime}-T_{s,t}\|=\|
T_{s,t}\left(T_{t,t^\prime}-1\right)\|\le\|
T_{s,t}\|\|T_{t,t^\prime}-1\| \le C|t'-t|.
\]
Therefore, by \textbf{Assumption B}, we have $\lim_{t^\prime
\downarrow t}\| T_{s,t^\prime}-T_{s,t}\|=0$.
\end{proof}
Let us note down  the  following  result  about  the  generator of
the evolutions of the type    $T_{s,t}$ which  is  proved in the
Appendix.

\bthm \label{Ttt}
   There exists  a Lebesgue measurable  function  $G:\mathbb R_+\rightarrow  \B(\mathbf h)$  such that  G is  locally essentially
 bounded  and   $$T_{s,t}-1=\int_s^t T_{s, \tau}  G(\tau)  d \tau.$$
 \ethm

\begin{equation}\label{eqn:assumption T}
\lim_{h\downarrow0}\frac{T_{t,t+h}-I}{h}=G(t)
\end{equation}
in the operator norm  topology  for almost  every  $t$.

We shall need  the following  observation (see Equation (6.2) in
\cite{SSS}):
\begin{equation}\label{UTknc0}
\sum_{k\ge 1} \left\|\left(U_{s,t}-1\right)(\phi_k,w)\Omega\right\|^2
 =\left\langle w,\left(1-T_{s,t}\right)w\right\rangle
  +\left\langle\left(1-T_{s,t}\right)w,w\right\rangle
\end{equation}
for any $w\in\mathbf{h}$, where $\{\phi_k\}$ is an complete orthonormal basis of $\mathbf{h}$.

\begin{lemma}\label{4Ut-111}
Under the \textbf{Assumptions  C},  for each $s\ge0$, we have the
following:
\begin{itemize}
  \item [(i)] for any $n\ge 3$, $\underbar{u},\underbar{v} \in \mathbf{h}^{\otimes n}$ and
$\underline{\epsilon}\in \mathbb{Z}_2^n$,
\begin{equation}
\lim_{t\downarrow s}\frac{1}{t-s}
 \left\langle\Omega,\left(\prod_{k=1}^n\left[\left(U_{s,t}^{(\epsilon_k)}-1\right)(u_k,v_k)\right]\right)\Omega\right\rangle=0,
\end{equation}

  \item [(ii)] for any vectors $u,v\in \mathbf{h}$,
   product vectors $ \underbar{w},\underbar{z}\in \mathbf{h}^{\otimes n}$ and
$\epsilon\in\mathbb{Z}_2$, $\underline{\epsilon^\prime}\in\mathbb{Z}_2^n$,
\begin{align}
\lim_{t\downarrow s}&\frac{1}{t-s} \left\langle\left(U_{s,t}-1\right)^{(\epsilon)}(u,v)\Omega,
    \left(U_{s,t}^{(\underline{\epsilon^\prime})}-1\right)(\underbar{p},\underbar{w})\Omega\right\rangle
    \label{UtUt*inner}\\
&= (-1)^\epsilon
\lim_{t\downarrow s}\frac{1}{t-s} \left\langle\left(U_{s,t}-1\right)(u,v)\Omega,
     \left(U_{s,t}^{(\underline{\epsilon^\prime})}-1\right)(\underbar{p},\underbar{w})\Omega\right\rangle.
    \nonumber
\end{align}
\end{itemize}
\end{lemma}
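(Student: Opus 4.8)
The plan is to reduce both parts to the Gaussianity hypothesis (Assumption C, equation \eqref{eqn:Gaussian condition}) together with the regularity estimate of Assumption B, using the operator identities in Lemma \ref{Auv} to rearrange products of the operators $(U_{s,t}^{(\epsilon_k)}-1)(u_k,v_k)$. For part (i), I would argue by induction on $n$, the base case $n=3$ being exactly Assumption C. The key device is Lemma \ref{Auv}(iv), which lets me write a product $A(u,e)B(e,v)$ type expansion; more directly, I would use Lemma \ref{Auv}(iii) to fuse two adjacent factors $(U_{s,t}^{(\epsilon_{k})}-1)(u_{k},v_{k})(U_{s,t}^{(\epsilon_{k+1})}-1)(u_{k+1},v_{k+1})$ into a single factor of the form $[\,(U_{s,t}^{(\epsilon_k)}-1)(\,|v_k\rangle\langle u_{k+1}|\otimes 1\,)(U_{s,t}^{(\epsilon_{k+1})}-1)\,](u_k,v_{k+1})$, at the cost of replacing the rank-one operator by a general bounded one; a limiting/linearity argument then reduces the case of a general bounded middle operator to the rank-one case, so that a product of $n$ factors is controlled by products of $n-1$ factors, each bounded in norm by $C|t-s|$ via Assumption B (more precisely via \eqref{UTknc0}, which shows $\|(U_{s,t}-1)(\phi_k,w)\Omega\|$ is square-summable with sum $O(|t-s|)$, hence each individual $\|(U_{s,t}^{(\epsilon)}-1)(u,v)\Omega\|=O(\sqrt{|t-s|})$ after Cauchy--Schwarz against $\Omega$). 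Collecting three such $\sqrt{|t-s|}$ factors already beats the $\tfrac{1}{t-s}$ normalization, which is why $n\ge 3$ is exactly the right threshold and why the case $n=3$ suffices to start the induction.

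For part (ii), the point is to move the adjoint off the first factor. Write the left-hand inner product as $\langle (U_{s,t}-1)^{(\epsilon)}(u,v)\Omega,\ \eta_t\rangle$ with $\eta_t:=(U_{s,t}^{(\underline{\epsilon'})}-1)(\underbar{p},\underbar w)\Omega$. If $\epsilon=0$ there is nothing to prove. If $\epsilon=1$, then $(U_{s,t}-1)^{*}(u,v)=[(U_{s,t}-1)^{*}](u,v)$, and by Lemma \ref{Auv}(ii) this equals $(U_{s,t}-1)(v,u)^{*}$. The strategy is then to use unitarity of $U_{s,t}$: from $U_{s,t}^{*}U_{s,t}=1$ one gets, via Lemma \ref{Auv}(iv), that $\sum_{j}(U_{s,t}-1)^{*}(u,e_j)(U_{s,t}-1)(e_j,v) = -[(U_{s,t}-1)(u,v)+(U_{s,t}-1)^{*}(u,v)]$ plus a term that, when its vacuum expectation against the relevant vectors is taken, is $O(|t-s|^{?})$ negligible by part (i) — this is the standard ``$L^{*}L + L + L^{*}=0$'' Itô/unitarity relation in the present notation. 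Pairing the identity $(U_{s,t}-1)^{*}(u,v) = -(U_{s,t}-1)(u,v) - \sum_j (U_{s,t}-1)^{*}(u,e_j)(U_{s,t}-1)(e_j,v)$ against $\Omega$ on one side and $\eta_t$ on the other, the sum term contributes a product of at least three $(U_{s,t}-1)$-type factors (two from the sum, at least one inside $\eta_t$), hence vanishes after division by $t-s$ by part (i) and the same $\sqrt{|t-s|}$ counting; what survives is precisely $-\langle (U_{s,t}-1)(u,v)\Omega,\eta_t\rangle$, giving the factor $(-1)^{\epsilon}$ with $\epsilon=1$.

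I would carry this out in the order: first establish the uniform smallness bound $\|(U_{s,t}^{(\epsilon)}-1)(u,v)\Omega\|\le (2C|t-s|)^{1/2}\|u\|\|v\|$ from \eqref{UTknc0} and Lemma \ref{Auv}(v); then prove (i) by the induction-plus-fusion argument above; then derive the unitarity relation for $(U_{s,t}-1)^{*}(u,v)$ and deduce (ii) from (i). The main obstacle is the fusion step in (i): Lemma \ref{Auv}(iii) forces a general bounded operator $|v_k\rangle\langle u_{k+1}|\otimes 1_{\mathcal H}$ into the middle slot, and one must check that the estimate and the limit survive when this rank-one operator is replaced by an arbitrary element of $\mathcal B(\mathbf h\otimes\mathcal H)$ — this requires extending \eqref{eqn:Gaussian condition} and its $n$-fold analogue from product/rank-one middle factors to general ones, which is done by norm-density of finite-rank operators together with the uniform bound, but it is the place where care is needed to keep all constants independent of $s,t$.
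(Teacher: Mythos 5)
Your proposed argument for part (i) has a real gap, and the heuristic you give for why it should work is, in fact, incorrect. From \eqref{UTknc0} and the remark after Assumption B one does get
\[
\left\|\left(U_{s,t}^{(\epsilon)}-1\right)(u,v)\Omega\right\|\le\sqrt{2C|t-s|}\,\|u\|\,\|v\|,
\]
but this is a bound on \emph{vectors}, and in a vacuum matrix element $\langle\Omega,X_1\cdots X_n\Omega\rangle$ it can be spent at most twice: once on $X_1^*\Omega$ and once on $X_n\Omega$. The interior factors $X_2,\dots,X_{n-1}$ are only controlled in \emph{operator} norm, $\|X_k\|\le 2\|u_k\|\|v_k\|$, so one gets $O(t-s)$, never $o(t-s)$, from this counting. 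In particular, one does not obtain ``three $\sqrt{|t-s|}$ factors'' — if one could, Assumption C would be a consequence of Assumption B and would not need to be assumed. The fusion step via Lemma \ref{Auv}(iii) also does not close an induction: merging $X_kX_{k+1}$ produces a factor of the form $\bigl[(U_{s,t}^{(\epsilon_k)}-1)(|v_k\rangle\langle u_{k+1}|\otimes 1_{\mathcal H})(U_{s,t}^{(\epsilon_{k+1})}-1)\bigr](u_k,v_{k+1})$, which is not of the shape $(U_{s,t}^{(\epsilon)}-1)(\cdot,\cdot)$, so the inductive hypothesis simply does not apply to the resulting $(n-1)$-fold product. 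The paper sidesteps this by citing the argument of Lemma 6.6 of \cite{SSS}, which is genuinely different from the counting you propose.

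For part (ii) the spirit is right — use unitarity to relate $U^*_{s,t}-1$ to $U_{s,t}-1$ — but your execution differs from the paper's and is itself gappy. The paper sets $\Phi=(U_{s,t}^{(\underline{\epsilon'})}-1)(\underbar{p},\underbar{w})$, writes $U_{s,t}+U_{s,t}^*-2=-(U_{s,t}^*-1)(U_{s,t}-1)$, expands via Lemma \ref{Auv}(iv) into a sum over a basis index $k$, and applies Cauchy--Schwarz \emph{in $k$}. One factor becomes $2\,\mathrm{Re}\langle v,\frac{1-T_{s,t}}{t-s}v\rangle$, which converges by \eqref{UTknc0} and \eqref{eqn:assumption T}; the other becomes $\frac{1}{t-s}\langle\Phi\Omega,(2-U_{s,t}^*-U_{s,t})(u,u)\Phi\Omega\rangle$, which tends to $0$ by Assumption C (and part (i)). Your version instead pairs the expansion of $(U_{s,t}-1)^*(u,v)$ against $\eta_t=(U_{s,t}^{(\underline{\epsilon'})}-1)(\underbar{p},\underbar{w})\Omega$ and asserts the cross term has ``at least three $(U_{s,t}-1)$-type factors,'' invoking (i) directly. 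But $(U_{s,t}^{(\underline{\epsilon'})}-1)(\underbar{p},\underbar{w})$ equals $\prod_kU_{s,t}^{(\epsilon_k')}(p_k,w_k)-\prod_k\langle p_k,w_k\rangle$, not a product of $(U_{s,t}^{(\epsilon_k')}-1)(p_k,w_k)$, so (i) does not apply to it without first telescoping. The paper's Cauchy--Schwarz-in-$k$ route avoids this bookkeeping cleanly.
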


\begin{proof}
(i) The  proof is a simple modification of the proof of Lemma 6.6 in \cite{SSS}.

(ii)  The idea here is similar to that in the   proof  of Lemma 6.7
in \cite{SSS}. For $\epsilon =0$, it is obvious. To see this for
$\epsilon=1$, put
\[
\Phi=\left(U_{s,t}^{(\underline{\epsilon^\prime})}-1\right)(\underbar{p},\underbar{w})
\]
and consider the following
\begin{align}
\lim_{t\downarrow s}&\frac{1}{t-s}\left\langle\left(U_{s,t}+U_{s,t}^*-2\right)(u,v)\Omega,\Phi\Omega\right\rangle
   \label{**}\\
&=-\lim_{t\downarrow s}\frac{1}{t-s}\left\langle\left[\left(U_{s,t}^*-1\right)\left(U_{s,t}-1\right)\right](u,v)\Omega,
  \Phi\Omega\right\rangle
   \nonumber\\
&=-\lim_{t\downarrow s}\frac{1}{t-s}\sum_{k\ge 1} \left\langle\left(U_{s,t}-1\right)(e_k,v)\Omega,
  \left(U_{s,t}-1\right)(e_k,u)\Phi\Omega\right\rangle.
   \nonumber
\end{align}
On the other hand, we have
\begin{align*}
&\left|
\frac{1}{t-s}
 \sum_{k\ge 1}\left\langle\left(U_{s,t}-1\right)(e_k,v)\Omega,
  \left(U_{s,t}-1\right)(e_k,u)\Phi\Omega\right\rangle\right|^2\\
&\quad\le
\left(\sum_{k\ge 1} \frac{1}{t-s}\left\|\left(U_{s,t}-1\right)(e_k,v)\Omega\right\|^2\right)
\left(\sum_{k\ge 1} \frac{1}{t-s}\left\|\left(U_{s,t}-1\right)(e_k,u)\Phi\Omega\right\|^2\right).
\end{align*}
By \eqref{UTknc0} and (iv) in Lemma, the above quantity is equal to
\begin{align*}
& 2 Re\left\langle v,\frac{1-T_{s,t}}{t-s}v\right\rangle
  \frac{1}{t-s}\left\langle\Phi\Omega,\left[\left(U_{s,t}^*-1\right)\left(U_{s,t}-1\right)\right](u,u)\Phi\Omega\right\rangle\\
&=2 Re \left\langle v, \frac{1-T_{s,t}}{t-s} v\right\rangle
  \frac{1}{t-s}\left\langle\Phi\Omega,\left(2-U_{s,t}^*-U_{s,t}\right)(u,u)\Phi\Omega\right\rangle,
\end{align*}
and by \eqref{eqn:assumption T}, $\lim_{t\downarrow s}\left\langle
v, \frac{1-T_{s,t}}{t-s} v\right\rangle=\left\langle v,
G(s)v\right\rangle$ for any $v\in\mathbf h$. Also, by
\textbf{Assumption C} we have
\[
\lim_{t\downarrow s}\frac{1}{t-s}\left\langle\Phi\Omega,\left(2-U_{s,t}^*-U_{s,t}\right)(u,u)\Phi\Omega\right\rangle=0.
\]
Therefore, we have
\[
\lim_{t\downarrow s}\frac{1}{t-s}
 \sum_{k\ge 1} \left\langle\left(U_{s,t}-1\right)(e_k,u)\Omega,
  \left(U_{s,t}-1\right)(e_k,v)\left(U_{s,t}^{(\underline{\epsilon^\prime})}-1\right)
      (\underbar{p},\underbar{w})\Omega\right\rangle
=0,
\]
which, by applying \eqref{**}, implies (\ref{UtUt*inner}).
  \end{proof}

For each $s\ge0$ and for  vectors  $u,v,p,w\in\mathbf{h}$ the
identity (\ref{UtUt*inner}) gives
\begin{align}
\lim_{t\downarrow s}&\frac{1}{t-s}\left\langle\left(U_{s,t}-1\right)^{(\epsilon)}(u,v)\Omega,
 \left(U_{s,t}-1\right)^{(\epsilon^\prime)}(p,w)\Omega\right\rangle
  \label{UtUt*10}\\
 &=(-1)^{\epsilon+\epsilon^\prime}\lim_{t\downarrow s}\frac{1}{t-s}
  \left\langle\left(U_{s,t}-1\right)(u,v)\Omega,\left(U_{s,t}-1\right)(p,w)\Omega\right\rangle.
  \nonumber
\end{align}

We now introduce the partial trace $\mathrm{Tr}_\mathcal{H}$ which is a linear map from
 $\mathcal{B}_1(\mathbf{h}\otimes\mathcal{H})$  to $\mathcal{B}_1(\mathbf{h})$ defined  by
\[
\langle u,\mathrm{Tr}_\mathcal{H}(B) v\rangle
:=\sum_{j\ge 1} \langle u\otimes\zeta_j , B  v\otimes\zeta_j\rangle,
 \qquad \forall u,v\in \mathbf h
\]
for $B\in\mathcal{B}_1(\mathbf{h}\otimes\mathcal{H})$.
In particular, $\mathrm{Tr}_\mathcal{H}(B)=\mathrm{Tr}(B_2)\,B_1$ for $B=B_1\otimes B_2$.
Then we define a family of operators $\{Z_{s,t}\}_{0\le s\le t}$ on the Banach space $\mathcal{B}_1(\mathbf{h})$ by
\begin{equation}\label{eqn:Z-{s,t}}
Z_{s,t}(\rho)
=\mathrm{Tr}_\mathcal{H}\left[U_{s,t}\left(\rho \otimes |\Omega><\Omega|\right)U_{s,t}^*\right],
\qquad \rho \in \mathcal{B}_1(\mathbf{h}).
\end{equation}
Thus, for any $u,v,p,w\in \mathbf{h}$, we have
\begin{equation}\label{Ztsimple}
\left\langle p,Z_{s,t}(|w><v|) u \right\rangle :=\left\langle
U_{s,t}(u,v)\Omega,U_{s,t}(p,w)\Omega\right\rangle.
\end{equation}

 For
$\rho\in\mathcal{B}_1(\mathbf{h})$, by the definition of $Z_{s,t}$
and trace norm (see page no. 47 in \cite{gelshi}), we have
\begin{align*}
\|Z_{s,t}(\rho)\|_1
 &=\left\|\mathrm{Tr}_\mathcal{H}[ U_{s,t}\left(\rho\otimes|\Omega><\Omega|\right)U_{s,t}^*]\right\|_1\\
 &=\sup_{\phi,\psi:~ \mathrm{ons}~ \mathrm{of}~ \mathbf{h}}
   \sum_{k\ge 1}
   \left|\left\langle\phi_k ,\mathrm{Tr}_{\mathcal H}
   \left[U_{s,t}\left(\rho\otimes|\Omega><\Omega|\right)U_{s,t}^*\right]\psi_k\right\rangle\right|\\
 &\le\sup_{\phi,\psi:~ \mathrm{ons}~ \mathrm{of}~ \mathbf{h}}
  \sum_{j,k\ge 1}\left|\left\langle\phi_k\otimes\zeta_j,
   U_{s,t}\left(\rho\otimes|\Omega><\Omega|\right)U_{s,t}^*\psi_k\otimes\zeta_j\right\rangle\right|\\
 &\le\left\|U_{s,t}\left(\rho\otimes|\Omega><\Omega|\right)U_{s,t}^*\right\|_1\le \|\rho\|_1.
\end{align*}
Thus $Z_{s,t}$ is contractive. For any $u,v\in \mathbf h, \|
U_{s,t}(u,v)\Omega\|^2=\langle u, Z_{s,t} (|v><v| )u\rangle$ and
positivity of $Z_{s,t}$ is  clear.

\begin{lemma}
Under the \textbf{Assumptions A} and \textbf{B}, $\{Z_{s,t}\}$ is a
family of positive contractive  map on $\mathcal{B}_1(\mathbf{h})$
satisfying
\begin{itemize}
  \item [(i)] for any  $0\le r\le s\le t<\infty$,
  $Z_{r,s}Z_{s,t}=Z_{r,t}, Z_{s,s}=1$
  \item [(ii)] for any   $  t'\ge t\ge s \ge 0 ,
  \|Z_{s,t^\prime}-Z_{s,t}\|_1\le 4C|t'-t|,$
  \item [(iii)] For any  $\rho\in \mathcal B_1 (\mathbf h),  Tr(Z_{s,t}\rho)=Tr(\rho).$
\end{itemize}
\end{lemma}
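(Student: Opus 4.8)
The plan is to verify the three properties directly from the definition \eqref{eqn:Z-{s,t}} of $Z_{s,t}$, using the evolution and independent-increment structure of $\{U_{s,t}\}$ together with the Lipschitz control already recorded for $\{T_{s,t}\}$. Positivity and contractivity are already in hand, so only (i)--(iii) remain.

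\medskip\noindent\textbf{Proof of (i).} I would work at the level of matrix elements via \eqref{Ztsimple}. Fix $r\le s\le t$ and $u,v,p,w\in\mathbf h$. Using $U_{r,t}=U_{r,s}U_{s,t}$ together with Lemma~\ref{Auv}(iv) (strong convergence of $\sum_j U_{r,s}(u,e_j)U_{s,t}(e_j,v)$) one has $U_{r,t}(u,v)\Omega=\sum_j U_{r,s}(u,e_j)U_{s,t}(e_j,v)\Omega$, and similarly for $(p,w)$. Hence
\[
\langle p,Z_{r,t}(|w><v|)u\rangle
 =\sum_{j,k}\big\langle U_{r,s}(u,e_j)U_{s,t}(e_j,v)\Omega,\,U_{r,s}(p,e_k)U_{s,t}(e_k,w)\Omega\big\rangle .
\]
Now $[r,s)$ and $[s,t)$ are disjoint, so by \textbf{(A2)(i)} each $U_{r,s}(\cdot,\cdot)$ commutes with each $U_{s,t}(\cdot,\cdot)$ and its adjoint; moving the inner-noise factors past the outer ones and invoking the factorization \textbf{(A2)(ii)} of vacuum expectations (with $n=m=1$) lets the sum split as
\[
\sum_{j,k}\big\langle U_{s,t}(e_j,v)\Omega,\,\big[U_{r,s}^*(u,?)U_{r,s}(p,?)\big](\cdots)U_{s,t}(e_k,w)\Omega\big\rangle,
\]
which after reorganizing is exactly $\sum_j\langle U_{s,t}(e_j,v)\Omega,\,Z_{r,s}(\cdots)\,U_{s,t}(e_j,w)\Omega\rangle=\langle p,Z_{r,s}(Z_{s,t}(|w><v|))u\rangle$. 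Since operators of the form $|w><v|$ span a norm-dense subspace of $\mathcal B_1(\mathbf h)$ and all maps involved are bounded, $Z_{r,s}Z_{s,t}=Z_{r,t}$ follows; $Z_{s,s}=1$ is immediate from $U_{s,s}=1$ and $\langle\Omega,\Omega\rangle=1$.

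\medskip\noindent\textbf{Proof of (ii).} By (i), $Z_{s,t'}-Z_{s,t}=Z_{s,t}\big(Z_{t,t'}-1\big)$, and since $Z_{s,t}$ is contractive it suffices to bound $\|Z_{t,t'}-1\|_1$ (as an operator on $\mathcal B_1(\mathbf h)$) by $4C|t'-t|$. For $\rho=|w><v|$ with $\|v\|=\|w\|=1$, expand
\[
Z_{t,t'}(|w><v|)-|w><v|
 =\mathrm{Tr}_{\mathcal H}\!\big[(U_{t,t'}-1)(\,|w><v|\otimes|\Omega><\Omega|\,)U_{t,t'}^*\big]
 +\mathrm{Tr}_{\mathcal H}\!\big[(\,|w><v|\otimes|\Omega><\Omega|\,)(U_{t,t'}^*-1)\big],
\]
estimate each trace-class norm by $\|(U_{t,t'}-1)F_{\Omega}w\|\cdot\|F_{\Omega}v\|$ type bounds, and recognize via \eqref{UTknc0} that $\sum_k\|(U_{t,t'}-1)(\phi_k,v)\Omega\|^2=2\,\mathrm{Re}\langle v,(1-T_{t,t'})v\rangle\le 2\|1-T_{t,t'}\|\le 2C|t'-t|$, using the Remark after Lemma~3.3 (Assumption~B gives $\|T_{s,t}-1\|\le C|t-s|$). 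Summing the two symmetric contributions and controlling the general trace-class norm through polarization of rank-one operators produces the factor $4$; the routine bookkeeping I would not spell out in full.

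\medskip\noindent\textbf{Proof of (iii).} This is the easy one: $\mathrm{Tr}(Z_{s,t}\rho)=\mathrm{Tr}\big(\mathrm{Tr}_{\mathcal H}[U_{s,t}(\rho\otimes|\Omega><\Omega|)U_{s,t}^*]\big)=\mathrm{Tr}_{\mathbf h\otimes\mathcal H}\big(U_{s,t}(\rho\otimes|\Omega><\Omega|)U_{s,t}^*\big)=\mathrm{Tr}(\rho\otimes|\Omega><\Omega|)=\mathrm{Tr}(\rho)$, using cyclicity of the full trace and unitarity of $U_{s,t}$, together with $\mathrm{Tr}|\Omega><\Omega|=\|\Omega\|^2=1$.

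\medskip The main obstacle is (i): one must carefully justify the interchange of the (strongly convergent but not obviously absolutely convergent) double sum over the noise ONB with the rearrangement of $U_{r,s}$- and $U_{s,t}$-factors, and confirm that \textbf{(A2)} applies in exactly the form needed (the statement of \textbf{(A2)(ii)} is written for products $U^{(\underline\epsilon)}_{\underbar s,\underbar t}$, so one should check the $n=m=1$ specialization covers $\langle\Omega, U_{r,s}(\cdot,\cdot)U_{s,t}(\cdot,\cdot)\Omega\rangle$ factorizations after the commutation step). Everything else is bounded-operator bookkeeping.
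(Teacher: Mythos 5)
Your proofs of (i) and (iii) are sound and follow the paper's own (extremely terse) outline: for (i) the paper also reduces to matrix elements $\left\langle p, Z_{r,t}(|w\rangle\langle v|)u\right\rangle$ and invokes the evolution and independent-increment structure of $\{U_{s,t}\}$; for (iii) the cyclicity-plus-unitarity argument is exactly right. Your main caveat about justifying the noise-ONB rearrangement in (i) is fair but manageable.

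The genuine gap is in (ii). You decompose
\begin{equation*}
Z_{t,t'}(|w\rangle\langle v|)-|w\rangle\langle v|
 =\mathrm{Tr}_{\mathcal H}\left[(U_{t,t'}-1)\big(|w\rangle\langle v|\otimes|\Omega\rangle\langle\Omega|\big)U_{t,t'}^*\right]
 +\mathrm{Tr}_{\mathcal H}\left[\big(|w\rangle\langle v|\otimes|\Omega\rangle\langle\Omega|\big)(U_{t,t'}^*-1)\right]
\end{equation*}
and then bound each piece by norms like $\|(U_{t,t'}-1)(w\otimes\Omega)\|\cdot\|v\otimes\Omega\|$. But \eqref{UTknc0} gives $\|(U_{t,t'}-1)(w\otimes\Omega)\|^2 = 2\,\mathrm{Re}\langle w,(1-T_{t,t'})w\rangle = O(|t'-t|)$, so the vector norm itself is only $O(\sqrt{|t'-t|})$. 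Equivalently, at the level of matrix elements your two-term split reads $\langle A,B\rangle-\langle a,b\rangle=\langle A-a,B\rangle+\langle a,B-b\rangle$ with $A=U(\psi_k,v)\Omega$, $B=U(\phi_k,w)\Omega$, $a=\langle\psi_k,v\rangle\Omega$, $b=\langle\phi_k,w\rangle\Omega$; the first summand, after summing over $k$ and applying Cauchy--Schwarz, yields $\big(\sum_k\|(U-1)(\psi_k,v)\Omega\|^2\big)^{1/2}\big(\sum_k\|U(\phi_k,w)\Omega\|^2\big)^{1/2}\le\sqrt{2C|t'-t|}\,\|v\|\|w\|$, again only a square-root rate. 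The paper avoids this by using the \emph{three}-term split $\langle A,B\rangle-\langle a,b\rangle=\langle A-a,B-b\rangle+\langle a,B-b\rangle+\langle A-a,b\rangle$: the cross term has $(U-1)$ on \emph{both} sides so Cauchy--Schwarz over $k$ produces $\sqrt{|t'-t|}\cdot\sqrt{|t'-t|}=|t'-t|$ (the factor $2C$), while the remaining two terms reduce directly to $\|(T_{t,t'}-1)w\|\|v\|$ and $\|(T_{t,t'}-1)v\|\|w\|$, each $\le C|t'-t|\|w\|\|v\|$, giving the total $4C$. Also, the paper handles a general $\rho\in\mathcal B_1(\mathbf h)$ via its singular value decomposition $\rho=\sum_k\lambda_k|\phi_k\rangle\langle\psi_k|$ (not polarization), which is what cleanly propagates the rank-one bound to $\|Z_{s,t}-1\|_1\le 4C|t-s|$.
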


\begin{proof}

(i)To prove evolution property of  $Z_{s,t}$  it is enough to show
that for any $u,v,p,w\in \mathbf h, \langle
U_{s,t}(u,v)\Omega,U_{s,t}(p,w)\Omega\rangle=\langle p, Z_{r,t}
(|w><v| )u\rangle= \langle p,  Z_{r,s}  Z_{s,t} (|w><v|) u\rangle. $
This can be checked  by using  evolution and independent increment
property of unitary
family  $U_{s,t}.$\\
 (ii)  For any rank one operator $\rho=|w><v|$,
$w,v\in\mathbf{h}$, we have

\begin{align*}
\|(Z_{s,t}-1) (|w><v|)\|_1
&=\sup_{\{\phi\},\{\psi\}~ ons~ of~ \mathbf h}\sum_{k\ge 1}|\langle \phi_k ,(Z_{s,t}-1)(|w><v|)\psi_k\rangle|\\
&=\sup_{\phi,\psi}\sum_{k\ge 1}|\langle U_{s,t}(\psi_k,v)\Omega,
 U_{s,t}(\phi_k,w)\Omega\rangle-\overline{\langle\psi_k,v\rangle}\langle\phi_k,w\rangle|\\
&\le \sup_{\phi,\psi}\sum_{k\ge 1}|\langle
(U_{s,t}-1)(\psi_k,v)\Omega, ( U_{s,t}-1) (\phi_k,w)\Omega\rangle|\\
&+\sup_{\phi,\psi}\sum_{k\ge 1}|\overline{\langle\psi_k,v\rangle}\langle \Omega, ( U_{s,t}-1) (\phi_k,w)\Omega|\\
&+\sup_{\phi,\psi}\sum_{k\ge 1}|\overline{\langle\Omega,
(U_{s,t}-1)(\psi_k,v)\Omega\rangle}\langle\phi_k,w\rangle|\\
& \le
\sup_{\phi,\psi}\left[\sum_{k\ge1}\|(U_{s,t}-1)(\psi_k,v)\Omega\|^2\right]^{1/2}
\left[\sum_{k\ge1}\|(U_{s,t}-1)(\psi_k,w)\Omega\|^2\right]^{1/2}\\
&\quad+\sup_{\phi,\psi}\left[\sum_{k\ge
1}|\langle\psi_k,v\rangle|^2\right]^{1/2} \left[\sum_{k\ge
1}|\langle  \phi_k,(T_{s,t}-1)w\rangle|^2\right]^{1/2}\\
&\quad+\sup_{\phi,\psi}\left[\sum_{k\ge
1}|\langle\phi_k,w\rangle|^2\right]^{1/2} \left[\sum_{k\ge
1}|\langle  \psi_k,(T_{s,t}-1)v\rangle|^2\right]^{1/2}.
\end{align*}

Hence by identity (\ref{UTknc0}) and {\bf Assumption  B} we obtain
\begin{align*}
&\|(Z_{s,t}-1) (|w><v|)\|_1 \\
&\le 2 \|(T_{s,t}-1)\|  \|w\| \|v\|+\|(T_{s,t}-1)w\|
\|v\|+\|(T_{s,t}-1)v\| \|w\|
\\
&\le 4  C |t-s|   \|w\| ~\|v\|.
\end{align*}
Now any for  $\rho=\sum_{k}\lambda_k|\phi_k><\psi_k| \in  \mathcal
B_1(\mathbf h )$, where $\{\phi_k\}$ and $\{\psi_k\}$ are two
orthonormal bases of $\mathbf{h}$ and we have
\begin{align*}
\|Z_{s,t}(\rho)- \rho\|_1 \le 4 C
\left(\sum_{k}\left|\lambda_k\right|\right)  |t-s| \le 4 C
\|\rho\|_1|t-s|
\end{align*}
and hence  \be \label{Zst-1}   \|Z_{s,t}- 1\|_1  \le 4 C |t-s|.\ee
By evolution property  and  contractivity of  $\{Z_{s,t}\}$
\[
\| Z_{s,t^\prime}-Z_{s,t}\|=\|
Z_{s,t}\left(Z_{t,t^\prime}-1\right)\|\le\|
Z_{s,t}\|\|Z_{t,t^\prime}-1\| \le 4 C|t'-t|.
\]
(iii)  It can be proved as in lemma   6.5  in \cite{SSS}
\end{proof}
The theorem \ref{StG} in the Appendix leads to following result for
 $Z_{s,t}.$ \bthm \label{Ztt}
    Under the {\bf Assumption  A, B} there exists  a Lebesgue measurable  function  $\mathcal L:\mathbb R_+\rightarrow   \mathcal B(\B_1(\mathbf h))$  such that  $\mathcal L$ is  locally essentially
 bounded in $\mathbb R_+$ and  such that  $$Z_{s,t}-1=\int_s^t Z_{s, \tau}
  \mathcal L(\tau)  d \tau,~~~~~~~~~~~~~~~~~~~~~~~~ \lim_{h\downarrow0}\frac{Z_{t,t+h}-I}{h}=\mathcal
  L(t).$$
 \ethm

\section{Construction of Noise Space}

Put $M_0:=\{(\underbar{u},\underbar{v},\underline{\epsilon}):
\underbar{u}=\otimes_{i=1}^n u_i, \underbar{v}=\otimes_{i=1}^n
v_i\in \mathbf{h}^{\otimes n},
\underline{\epsilon}=(\epsilon_1,\cdots,\epsilon_n)\in
\mathbb{Z}_2^n, ~ n\ge 1\}$. Now, consider the relation $``\sim"$ on
$M_0$ as defined in \cite{SSS} :
$(\underbar{u},\underbar{v},\underline{\epsilon})\sim
(\underbar{p},\underbar{w},\underline{\epsilon}^\prime)$ if
$\underline{\epsilon}=\underline{\epsilon}^\prime$ and
 $|\underbar{u}><\underbar{v}|=|\underbar{w}>< \underbar{z}|\in
 \B(\mathbf h^{\otimes n}).$  Now consider the   algebra $M$
generated by $M_0/\sim$ with multiplication structure given by
$(\underbar{u},\underbar{v},\underline{\epsilon})\cdot
(\underbar{p},\underbar{w},\underline{\epsilon}^\prime)=
(\underbar{u} \otimes \underbar{w},\underbar{v} \otimes
\underbar{z},\underline{\epsilon}\oplus
\underline{\epsilon}^\prime)$. For each $s\ge 0$ we define a scalar
valued map $K_s$ on $M\times M$ by setting, for
$(\underbar{u},\underbar{v},\underline{\epsilon}),
(\underbar{p},\underbar{w},\underline{\epsilon}^\prime) \in M_0$,
\[
K_s\left((\underbar{u},\underbar{v},\underline{\epsilon}), (\underbar{w},
   \underbar{z},\underline{\epsilon}^\prime)\right)
:=\lim_{t\downarrow s}\frac{1}{t-s}
 \left\langle\left(U_{s,t}^{(\underline{\epsilon})}-1\right)(\underbar{u},\underbar{v})\Omega,
      \left(U_{s,t}^{\underline{\epsilon}^\prime}-1\right)(\underbar{p},\underbar{w})\Omega\right\rangle
\]
if the limit exists.

\begin{theorem}\label{noise} For  almost every  $s$
\begin{itemize}
  \item [(i)] the map $K_s$ is a positive definite kernel on $M$,
  \item [(ii)] there exists a unique (up to unitary equivalence) separable Hilbert space $\mathbf{k}_s$,
               an embedding $\eta_s:M\rightarrow\mathbf{k}_s$ such that
\begin{equation}\label{eta-dense11}
\left\{\eta_s(\underbar{u},\underbar{v},\underline{\epsilon}):
(\underbar{u},\underbar{v},\underline{\epsilon})\in M_0\right\}~\mbox{ is total in}~\mathbf{k}_s,
\end{equation}
\begin{equation}\label{eta-kelnel-11}
\left\langle \eta_s(\underbar{u},\underbar{v},\underline{\epsilon}),
       \eta_s(\underbar{p},\underbar{w},\underline{\epsilon}^\prime)\right\rangle
=K_s\left((\underbar{u},\underbar{v},\underline{\epsilon}),(\underbar{p},\underbar{w},\underline{\epsilon}^\prime)\right),
\end{equation}
  \item [(iii)] for any $(\underbar{u},\underbar{v},\underline{\epsilon})\in M_0$,
$\underbar{u}=\otimes_{i=1}^n u_i$, $\underbar{v}=\otimes_{i=1}^n v_i$
and $\underline{\epsilon}=(\epsilon_1,\cdots,\epsilon_n)$
\begin{equation}\label{eta-n11}
\eta_s(\underbar{u},\underbar{v},\underline{\epsilon})
 =\sum_{i=1}^n\prod_{k\ne i} \left\langle u_k,v_k \right\rangle \eta_s(u_i,v_i,\epsilon_i),
\end{equation}
  \item [(iv)] $\eta_s(u,v,1)=-\eta_s(u,v,0)$ for any $u,v\in\mathbf{h}$,
  \item [(v)] for fixed $u,v,p,w\in\mathbf{h}$,
the map $s\mapsto K_s((u,v),(p,w))=\left\langle
\eta_s(u,v),\eta_s(p,w)\right\rangle$ is  Lebesgue measurable   and
 essentially  bounded in $\mathbb R_+$.
\end{itemize}
\end{theorem}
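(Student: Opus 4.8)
The plan is to establish each item in turn, using the known properties of the vacuum‐expectation evolutions $T_{s,t}$, $Z_{s,t}$ (in particular Theorems~\ref{Ttt} and~\ref{Ztt}) together with the Gaussianity consequences recorded in Lemma~\ref{4Ut-111}. First I would fix the "almost every $s$'' qualification: by Theorems~\ref{Ttt} and~\ref{Ztt} the derivatives $\lim_{h\downarrow0}(T_{t,t+h}-I)/h=G(t)$ and $\lim_{h\downarrow0}(Z_{t,t+h}-I)/h=\mathcal L(t)$ exist in norm for a.e.\ $t$; let $N$ be the null set outside of which both exist. For $s\notin N$, I claim the scalar $K_s\big((\underbar u,\underbar v,\underline\epsilon),(\underbar p,\underbar w,\underline\epsilon')\big)$ is well defined. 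For $n=m=1$ and $\underline\epsilon=\underline\epsilon'=\underline 0$ this is immediate from \eqref{Ztsimple}: $\frac1{t-s}\langle(U_{s,t}-1)(u,v)\Omega,(U_{s,t}-1)(p,w)\Omega\rangle$ is a finite linear combination of $\frac1{t-s}\langle p,(Z_{s,t}-|\cdot><\cdot|)(|w><v|)u\rangle$ type terms plus cross terms involving $\frac{1-T_{s,t}}{t-s}$, all of which converge by the choice of $s$. For higher $n,m$ one uses part (i) of Lemma~\ref{4Ut-111} to kill the products of three or more increments and part (ii)/\eqref{UtUt*10} to reduce every $U^*$ to a $U$ up to a sign; then \eqref{eta-n11}, i.e.\ the "derivation/linearization'' identity, expresses everything in terms of the $n=m=1$ limits. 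So I would prove (iii) and (iv) essentially simultaneously with the existence of the limit: expand $U_{\underbar s,\underbar t}^{(\underline\epsilon)}(\underbar u,\underbar v)-1$ as a telescoping sum $\sum_i\big(\prod_{k<i}(\cdots)\big)\big((U^{(\epsilon_i)}-1)(u_i,v_i)\big)\big(\prod_{k>i}(\cdots)\big)$, note via \eqref{Akn} that each factor $U^{(\epsilon_k)}(u_k,v_k)$ with $k\neq i$ contributes $\langle u_k,v_k\rangle$ in the $t\downarrow s$ limit (because $\|(U_{s,t}-1)(u_k,v_k)\|\to0$ uniformly by Assumption~B and the $1/(t-s)$ is already consumed by the single distinguished factor), and the terms with two or more distinguished increments vanish by Lemma~\ref{4Ut-111}(i). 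That yields \eqref{eta-n11} at the level of kernels, hence (once the GNS space is built) at the level of $\eta_s$; and (iv) is just the $\epsilon=1$ case of \eqref{UtUt*10} combined with the corresponding kernel identity $K_s((u,v,1),\cdot)=-K_s((u,v,0),\cdot)$.

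Next, (i): $K_s$ is a positive definite kernel on $M$. The point is that for each fixed $t>s$ the sesquilinear form
\[
\big((\underbar u,\underbar v,\underline\epsilon),(\underbar p,\underbar w,\underline\epsilon')\big)
\longmapsto \frac1{t-s}\big\langle (U_{s,t}^{(\underline\epsilon)}-1)(\underbar u,\underbar v)\Omega,\ (U_{s,t}^{(\underline\epsilon')}-1)(\underbar p,\underbar w)\Omega\big\rangle
\]
is manifestly a Gram matrix of the vectors $\frac1{\sqrt{t-s}}(U_{s,t}^{(\underline\epsilon)}-1)(\underbar u,\underbar v)\Omega\in\mathcal H$, hence positive definite; and positive definiteness is preserved under the pointwise limit $t\downarrow s$. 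One must check this is well defined on $M_0/\!\!\sim$ and extends to the algebra $M$, but that is exactly as in \cite{SSS}: if $|\underbar u><\underbar v|=|\underbar w><\underbar z|$ in $\B(\mathbf h^{\otimes n})$ then $U_{\underbar s,\underbar t}^{(\underline\epsilon)}(\underbar u,\underbar v)=U_{\underbar s,\underbar t}^{(\underline\epsilon)}(\underbar w,\underbar z)$ by linearity of the partial bracket, so $K_s$ descends, and it extends to $M$ by sesquilinearity. Given (i), item (ii) is the standard Kolmogorov/GNS construction: let $\mathbf k_s$ be the completion of $M/\ker K_s$, set $\eta_s$ to be the canonical map, which then satisfies \eqref{eta-kelnel-11} and has dense (hence total) range \eqref{eta-dense11}; separability follows because $\mathbf h$ is separable and $M_0$ has a countable dense-generating subset, or simply because $\eta_s(M_0)$ spans a space no larger than the (separable) closed span of $\{(U_{s,t}-1)(u,v)\Omega\}$.

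Finally (v): for fixed $u,v,p,w$ the function $s\mapsto K_s((u,v),(p,w))$ must be shown Lebesgue measurable and essentially bounded. Using \eqref{Ztsimple} and polarization I would write
\[
K_s((u,v),(p,w))=\lim_{t\downarrow s}\frac1{t-s}\Big[\langle p,(Z_{s,t}-1)(|w><v|)u\rangle-\overline{\langle\Omega,(U_{s,t}-1)(u,v)\Omega\rangle}\langle p,w\rangle-\langle v,u\rangle\langle\Omega,(U_{s,t}-1)(p,w)\Omega\rangle\Big],
\]
which by Theorems~\ref{Ttt} and~\ref{Ztt} equals $\langle p,(\mathcal L(s)(|w><v|))u\rangle-\overline{\langle v,G(s)u\rangle}\langle p,w\rangle-\langle v,u\rangle\langle p,G(s)w\rangle$ for a.e.\ $s$. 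Since $G(\cdot)$ and $\mathcal L(\cdot)$ are Lebesgue measurable and locally essentially bounded by those theorems, the right-hand side is a measurable, locally (in fact, given that Assumption~B yields a global constant $C$, essentially) bounded function of $s$, and the claimed bound $|K_s((u,v),(p,w))|\le C'\|u\|\|v\|\|p\|\|w\|$ follows from $\|\mathcal L\|_\infty\le 4C$ (cf.\ \eqref{Zst-1}) and $\|G\|_\infty\le C$ (cf.\ the remark after Lemma~3.? giving $\|T_{s,t}-1\|\le C|t-s|$). The main obstacle I anticipate is not any single step but the bookkeeping in the telescoping/linearization argument for general $n$, $m$, and $\underline\epsilon$ — making sure that when $U^{(\underline\epsilon)}_{\underbar s,\underbar t}$ is a product over possibly-unequal time intervals the "off-diagonal'' increments genuinely factor out as inner products $\langle u_k,v_k\rangle$ in the limit and that cross-terms mixing two distinct distinguished increments really are $o(t-s)$; this is where Assumption~C (through Lemma~\ref{4Ut-111}(i)) and the uniform Lipschitz bound of Assumption~B are both essential, and it is the part where I would follow the structure of Lemmas~6.6–6.7 of \cite{SSS} most closely.
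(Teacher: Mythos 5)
Your proposal follows essentially the same route as the paper: (i) positive definiteness by taking the $t\downarrow s$ limit of finite-$t$ Gram forms (the paper defers this to Lemma 7.1 of \cite{SSS}); (ii) Kolmogorov/GNS; (iii) the linearization identity \eqref{eta-n11} via Lemma \ref{4Ut-111} (the paper again mirrors \cite{SSS}); (iv) from the sign identity \eqref{UtUt*inner}; and (v) from the explicit formula \eqref{kernel11} together with measurability and essential boundedness of $G(\cdot)$ and $\mathcal L(\cdot)$ coming from Theorems \ref{Ttt} and \ref{Ztt}. The only blemish is a slip in your final display for (v): the term should be $-\overline{\langle u, G(s)v\rangle}\langle p,w\rangle$ rather than $-\overline{\langle v, G(s)u\rangle}\langle p,w\rangle$ (since $\lim_{t\downarrow s}\overline{\langle\Omega,(U_{s,t}-1)(u,v)\Omega\rangle}/(t-s)=\overline{\langle u,G(s)v\rangle}$), but this does not affect the measurability or boundedness conclusion.
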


\begin{proof}
(i) The proof is exactly same as the proof of Lemma 7.1 in
\cite{SSS}. By Lemma \ref{4Ut-111}, for elements
$(\underbar{u},\underbar{v},\underline{\epsilon}),
(\underbar{p},\underbar{w},\underline{\epsilon}^\prime) \in M_0$,
$\underline{\epsilon}\in \mathbb{Z}_2^m$  and
$\underline{\epsilon}^\prime\in \mathbb{Z}_2^n$, we have
\begin{align}
&K_s\left((\underbar{u},\underbar{v},\underline{\epsilon}),
   (\underbar{p},\underbar{w},\underline{\epsilon}^\prime)\right) \label{KK}\\
&=\lim_{t \downarrow s}\frac{1}{t-s}
 \left\langle\left(U_{s,t}^{(\underline{\epsilon})}-1\right)(\underbar{u},\underbar{v})\Omega,
   \left(U_{s,t}^{(\underline{\epsilon}^\prime)}-1\right)(\underbar{p},\underbar{w})\Omega\right\rangle
  {\nonumber} \\
&=\sum_{1\le i\le m,~1\le j\le n}~\prod_{k\ne i}\overline{\langle u_k,v_k \rangle}
    \prod_{l\ne j} \langle p_l,w_l \rangle
    \nonumber\\
&\times\lim_{t\downarrow s}\frac{1}{t-s}
  \left\langle\left(U_{s,t}-1\right)^{(\epsilon_i)}(u_i,v_i)\Omega,
       \left(U_{s,t}-1\right)^{(\epsilon_j^\prime)}(p_j,w_j)\Omega\right\rangle.
    \nonumber
\end{align}
Since
\begin{align*}
&\left\langle\left(U_{s,t}-1\right)(u,v)\Omega,\left(U_{s,t}-1\right)(p,w)\Omega\right\rangle\\
&\quad=\left\langle U_{s,t}(u,v)\Omega,U_{s,t}(p,w)\Omega\right\rangle-\overline{\langle u,v\rangle} \langle p,w\rangle\\
&\qquad -\overline{\langle u,v\rangle}
\left\langle\Omega,\left(U_{s,t}-1\right)(p,w)\Omega\right\rangle
        -\overline{\left\langle\Omega,\left(U_{s,t}-1\right)(u,v)\Omega\right\rangle} \langle  p,w\rangle\\
&\quad=\left\langle p,\left(Z_{s,t}-1\right)(|w><v|)u\right\rangle
 -\overline{\langle u,v\rangle} \left\langle p,\left(T_{s,t}-1\right)w\right\rangle
 -\overline{\left\langle u,\left(T_{s,t}-1\right)v\right\rangle}\langle p,w\rangle,
\end{align*}
the existence of the limits on the right hand side of (\ref{KK})
follows from the identity (\ref{UtUt*inner}) and theorems \ref{Ttt}
  and \ref{Ztt} and   $K_s$ is given by
\begin{align}
& K_s((u,v,\epsilon ),(p,w,\epsilon^\prime))
    \label{kernel11}\\
&=(-1)^{\epsilon+\epsilon^\prime}
 \lim_{t\downarrow s}\left\{\left\langle p,\frac{Z_{s,t}-1}{t-s}(|w><v|)u\right\rangle
 - \overline{\langle u,v\rangle }\left\langle p,\frac{T_{s,t}-1}{t-s}w\right\rangle\right\}
    \nonumber\\
&\qquad-(-1)^{\epsilon+\epsilon^\prime}
 \lim_{t\downarrow s}\overline{\left\langle u, \frac{T_{s,t}-1}{t-s}v\right\rangle} \langle p,w\rangle
   \nonumber\\
 &= (-1)^{\epsilon+\epsilon^\prime}\left\{\left\langle p,\mathcal{L}(s)(|w><v|)u\right\rangle
  -\overline{\langle u,v\rangle}\left\langle p,G(s) w\right\rangle
  -\overline{\left\langle u, G(s) v\right\rangle}\langle p,w\rangle\right\}.
   \nonumber
\end{align}
This expression can be  extend  to the algebra $M$ by
sesqui-linearly.

(ii) For each $s\ge 0$, the Kolmogorov's construction \cite{krp} to
the pair $(M,K_s)$ provides a Hilbert space $\mathbf{k}_s$ as the
closure of the span of
$\left\{\eta_s(\underbar{u},\underbar{v},\underline
{\epsilon}):(\underbar{u},\underbar{v},\underline{\epsilon})\in
M\right\}$.

(iii) Again as in \cite{SSS}, for any
$(\underbar{p},\underbar{w},\underline{\epsilon}^\prime)\in M_0$, by
Lemma \ref{4Ut-111}, we have
\begin{align*}
\left\langle\eta_s(\underbar{u},\underbar{v},\underline{\epsilon}),
      \eta_s(\underbar{p},\underbar{w},\underline{\epsilon}^\prime)\right\rangle
&=K_s\left((\underbar{u},\underbar{v},\underline{\epsilon}),
      (\underbar{p},\underbar{w},\underline{\epsilon}^\prime)\right)\\
&=\sum_{i=1}^n\prod_{k\ne i} \overline{\langle u_k,v_k\rangle}
  \left\langle\eta_s(u_i,v_i,\epsilon_i),\eta_s(\underbar{p},\underbar{w},\underline{\epsilon}^\prime)\right\rangle.
\end{align*}
Since
$\left\{\eta_s(\underbar{p},\underbar{w},\underline{\epsilon}^\prime)
 :(\underbar{p},\underbar{w},\underline{\epsilon}^\prime)\in M_0\right\}$ is a total
subset of $\mathbf{k}_s$,  (\ref{eta-n11}) follows.

(iv) By (\ref{UtUt*inner}), we have
\[
\left\langle\eta_s(u,v,1),\eta_s(\underbar{p},\underbar{w},\underline{\epsilon}^\prime)\right\rangle
=\left\langle-\eta_s(u,v,0),\eta_s(\underbar{p},\underbar{w},\underline{\epsilon}^\prime)\right\rangle
\] and hence  $\eta_s(u,v,1)=-\eta_s(u,v,0).$
\\
By parts (iii)  and (iv)  of this theorem, it is clear that
$\mathbf{k}_s$  is spanned by  the family $\{\eta_s(u,v): u,v\in
\mathbf h\},$  where we have written $\eta_t(u,v)$ for
$\eta_t(u,v,0)$.\\

 Since $ G(s), \mathcal
L(s)$ are essentially bounded in norm it follow from
\eqref{kernel11}  that $\eta_s(.,.):\mathbf h \times \mathbf h
\rightarrow \mathbf k_s$ is continuous  and thus  separability of
$\mathbf k_s$  follows from that of $\mathbf h.$\\
(v)  Since $\mathcal
L(s)$ and $G(s)$ are measurable essentially bounded,  result follows
from the identity \eqref{kernel11}.
\end{proof}

For any two  orthonormal  bases  $\{\phi_k\}, \{\psi_k\}$ of
$\mathbf
 h, $  the collection   of vectors $\{\eta_s(\phi_k,\psi_l):k,l\ge 1\}$
 is a countable  total family  in $\mathbf k_s$  and  $s\mapsto  \langle \eta_s(u,v), \eta_s(p,w)
 \rangle=K_s((u,v),(p,w))$ is a Lebesgue measurable  function.  Thus
 $s\mapsto  \langle \eta_s(u,v)$  is measurable.
The family  $\{\mathbf k_s:s\ge 0\}$  spanned by $\{\eta_s(u,v):s\ge
0, u,v\in \mathbf h\},$ is a measurable field of Hilbert spaces 
\cite{dix}.

 For any $T\ge 0,$ define
$K^T((u,v),(p,w))=\int_0^T K_s((u,v),(p,w))ds$
\[=\int_0^T \{\left\langle p,\mathcal{L}(s)(|w><v|)u\right\rangle
  -\overline{\langle u,v\rangle}\left\langle p,G(s) w\right\rangle
  -\overline{\left\langle u, G(s) v\right\rangle}\langle
  p,w\rangle\} ds.\]  Since each  $K_s$ is positive  definite it can be seen that   $K^T$  is a positive
definite  kernel. Let the associated  Hilbert  space  $\mathbf k^T.$
  There exists a family of
vectors  $\eta^T(u,v)$ which spans the Hilbert  space  $\mathbf k^T$
such that

$$\langle \eta^T(u,v),\eta^T(p,w)\rangle =K^T((u,v),(p,w))$$
$$ =\int_0^T
K_s((u,v),(p,w))ds=\int_0^T \langle \eta_s(u,v),\eta_s(p,w)\rangle
ds$$

In $\mathbf k^T$ there exists a bounded self adjoint operator  $A$
with  absolutely  continuous  simple spectrum   such that
$A\eta^T(u,v)(s)=s \eta_s(u,v)$ for almost every  $s\in [0,T]$  and
 $\mathbf k^T$
is the direct integral  $\int_{[0,T]}^{\oplus}\mathbf{k}_s ds$ (Ref
\cite{dix}).  There is natural isometric  embedding of $\mathbf
k^{T}$  in  $\mathbf k^{T'}$  for  $T\le T'$  by setting
$\eta_s^{T,T'}(u,v)=\eta_s^T(u,v)$  for all $0\le s\le  T$  and  $0$
for $s\in (T,T'].$


 \brmrk The integral
$ \int_{\mathbb R_+} K_s((u,v),(u,v))ds=\int_{\mathbb R_+}
\|\eta_s(u,v)\|^2 d$s need not exist and therefore
$\int_{\mathbb{R}_+}^{\oplus}\mathbf{k}_s ds$  may not  be defined.
\ermrk

\begin{lemma}\label{H,Lj11}
Under the hypothesis of Theorem \ref{noise}, we have the following:
\begin{itemize}
 \item[(i)] There exists a unique strong measurable  family of
 bounded
 operators  $L(t):\mathbf h\rightarrow   \mathbf h  \otimes  \mathbf k_t$
such that
\[\|L(t) v\|^2 =-2\mathrm{Re}\left\langle
v,G(t)v\right\rangle,\qquad \forall v\in \mathbf{h}.
\]

\item [(ii)] The map $t\mapsto L(t)$ is  essentially norm bounded.

\end{itemize}
\end{lemma}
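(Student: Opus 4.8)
The plan is to build the family $L(t)$ directly out of the Kolmogorov Hilbert spaces $\mathbf{k}_t$ constructed in Theorem \ref{noise}. Fix $t\ge0$ in the (co-null) set where the conclusions of Theorem \ref{noise} hold, and recall from part (iv) there that $\mathbf{k}_t$ is spanned by $\{\eta_t(u,v):u,v\in\mathbf{h}\}$, with inner product
\[
\langle\eta_t(u,v),\eta_t(p,w)\rangle
=K_t((u,v),(p,w))
=\langle p,\mathcal{L}(t)(|w><v|)u\rangle-\overline{\langle u,v\rangle}\langle p,G(t)w\rangle-\overline{\langle u,G(t)v\rangle}\langle p,w\rangle.
\]
First I would fix $p=u$, $w=v$ and read off $\|\eta_t(u,v)\|^2=\langle u,\mathcal{L}(t)(|v><v|)u\rangle-2\,\mathrm{Re}\langle u,v\rangle\langle u,G(t)v\rangle$; specializing once more to $u=v$ with $\|v\|=1$ and using $\mathrm{Tr}(\mathcal{L}(t)\rho)=0$ for all $\rho$ (differentiate part (iii) of the Lemma on $\{Z_{s,t}\}$, which gives $\mathrm{Tr}(Z_{s,t}\rho)=\mathrm{Tr}(\rho)$, hence $\mathrm{Tr}(\mathcal{L}(t)\rho)=0$) one expects $\langle v,\mathcal{L}(t)(|v><v|)v\rangle$ to combine with the $G(t)$ terms so that $\|\eta_t(v,v)\|^2=-2\,\mathrm{Re}\langle v,G(t)v\rangle$. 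This already identifies the correct target norm, and suggests defining $L(t)$ through $\eta_t$.

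The construction of $L(t)\in\mathcal{B}(\mathbf{h},\mathbf{h}\otimes\mathbf{k}_t)$ goes as follows. Using the partial-inner-product notation \eqref{partinn}, I would define $L(t)v$ to be the unique vector in $\mathbf{h}\otimes\mathbf{k}_t$ determined by
\[
\langle\!\langle u,L(t)v\rangle\!\rangle=\eta_t(u,v)\in\mathbf{k}_t,\qquad\forall\,u\in\mathbf{h},
\]
i.e.\ $\langle u\otimes\xi,L(t)v\rangle=\langle\eta_t(u,v),\xi\rangle$ for all $u\in\mathbf{h}$, $\xi\in\mathbf{k}_t$. For this to be well defined and bounded one must check that $(u,\xi)\mapsto\langle\eta_t(u,v),\xi\rangle$ is a bounded conjugate-bilinear form for each fixed $v$; the bound comes from the sesquilinearity of $K_t$ in its first slot (Theorem \ref{noise}, plus Lemma \ref{Auv}(v)-type estimates) together with the essential boundedness of $G(t)$ and $\mathcal{L}(t)$, giving $\|\eta_t(u,v)\|\le c\,\|u\|\,\|v\|$ for a.e.\ $t$ with $c$ locally essentially bounded. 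Linearity of $v\mapsto L(t)v$ follows from linearity of $v\mapsto\eta_t(u,v)$ (sesquilinearity of $K_t$, antilinear in the first variable of the pair so linear in $v$). Then
\[
\|L(t)v\|^2=\sup_{\|u\|=1}|\langle\!\langle u,L(t)v\rangle\!\rangle\ \text{-contributions}|\ \text{— more precisely}\ \|L(t)v\|^2=\langle L(t)v,L(t)v\rangle,
\]
and using an orthonormal basis $\{\phi_k\}$ of $\mathbf{h}$ one gets $\|L(t)v\|^2=\sum_k\|\langle\!\langle\phi_k,L(t)v\rangle\!\rangle\|^2=\sum_k\|\eta_t(\phi_k,v)\|^2$. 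To evaluate this sum I would mimic the identity \eqref{UTknc0}: $\sum_k\|\eta_t(\phi_k,v)\|^2=\lim_{s\downarrow0}\cdots$ should collapse, via $\sum_k\langle\phi_k,(1-T_{t,t+h})v\rangle$-type telescoping and $\mathrm{Tr}(\mathcal{L}(t)\,\cdot\,)=0$, to $\langle v,(1-T)\,\cdot\rangle+\langle(1-T)\,\cdot\,,v\rangle$ evaluated at the generator, i.e.\ to $-\langle v,G(t)v\rangle-\langle G(t)v,v\rangle=-2\,\mathrm{Re}\langle v,G(t)v\rangle$. This is exactly (i).

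For uniqueness in (i): if $L'(t)$ is another such family into $\mathbf{h}\otimes\mathbf{k}_t$ with the same norm identity, then the sesquilinear forms $\langle L(t)v_1,L(t)v_2\rangle$ and $\langle L'(t)v_1,L'(t)v_2\rangle$ agree after polarization (the norm determines the form on a complex space), and since $\{\langle\!\langle u,L(t)v\rangle\!\rangle=\eta_t(u,v)\}$ spans $\mathbf{k}_t$ the two constructions are intertwined by a unitary of $\mathbf{k}_t$ — so they are the same up to the unitary equivalence already built into the definition of $\mathbf{k}_t$. For part (ii), essential norm-boundedness of $t\mapsto L(t)$ is immediate from $\|L(t)\|^2=\sup_{\|v\|=1}\|L(t)v\|^2=\sup_{\|v\|=1}(-2\,\mathrm{Re}\langle v,G(t)v\rangle)\le 2\,\|G(t)\|$, and $G$ is locally essentially bounded by Theorem \ref{Ttt}. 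Strong measurability of $t\mapsto L(t)$ reduces, via the total countable family $\{\phi_k\otimes\eta_t(\phi_j,\phi_l)\}$ remarked on after Theorem \ref{noise}, to measurability of $t\mapsto\langle\phi_k\otimes\eta_t(\phi_j,\phi_l),L(t)v\rangle=\langle\eta_t(\phi_j,\phi_l),\eta_t(\phi_k,v)\rangle=K_t((\phi_j,\phi_l),(\phi_k,v))$, which is Lebesgue measurable by Theorem \ref{noise}(v); a limiting argument over finite linear combinations with rational coefficients then upgrades this to strong measurability of the whole family into the measurable field $\{\mathbf{h}\otimes\mathbf{k}_t\}$. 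The main obstacle I anticipate is the bookkeeping in the collapse $\sum_k\|\eta_t(\phi_k,v)\|^2=-2\,\mathrm{Re}\langle v,G(t)v\rangle$: one must legitimately interchange the sum over $k$ with the $t\downarrow s$ limit defining $K_s$, which is where the uniform estimates of Assumption B, the dominated-convergence-type control from \eqref{UTknc0}, and the vanishing-trace property of $\mathcal{L}$ all have to be brought to bear simultaneously.
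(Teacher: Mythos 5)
Your construction is essentially the paper's: the paper also defines $L(t)v=\sum_{k}e_k\otimes\eta_t(e_k,v)$ (equivalently, via $\langle\!\langle u,L(t)v\rangle\!\rangle=\eta_t(u,v)$, which is the Riesz-representation form you use), obtains $\|L(t)v\|^2=\sum_k\|\eta_t(e_k,v)\|^2$, and gets part (ii) from the essential boundedness of $G$. The one place you diverge is the key norm identity, and there your anticipated ``main obstacle'' is a non-issue if you argue as the paper does: instead of mimicking \eqref{UTknc0} and interchanging $\sum_k$ with the $t\downarrow s$ limit, simply sum the already-established pointwise formula \eqref{kernel11} over an orthonormal basis. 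Term by term, $\sum_k\langle e_k,\mathcal{L}(t)(|v><v|)e_k\rangle=\mathrm{Tr}\left(\mathcal{L}(t)(|v><v|)\right)=0$ (trace preservation of $Z_{s,t}$, as you note), while the two $G(t)$ terms sum by Parseval to $-2\,\mathrm{Re}\langle v,G(t)v\rangle$; the limit defining $K_t$ was taken once and for all in proving \eqref{kernel11}, so no dominated-convergence bookkeeping is needed, and the finiteness of this sum is exactly what shows $\sum_k e_k\otimes\eta_t(e_k,v)$ converges and $L(t)$ is bounded. Also beware the motivational sentence at the start: $\|\eta_t(v,v)\|^2$ is \emph{not} $-2\,\mathrm{Re}\langle v,G(t)v\rangle$ (the vanishing of $\mathrm{Tr}\left(\mathcal{L}(t)(|v><v|)\right)$ says nothing about the single diagonal entry $\langle v,\mathcal{L}(t)(|v><v|)v\rangle$, and the cross terms should read $-2\,\mathrm{Re}\bigl[\overline{\langle u,v\rangle}\langle u,G(t)v\rangle\bigr]$); only the full sum over an orthonormal basis gives the stated identity. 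Your measurability and boundedness arguments for (ii), via Theorem \ref{noise}(v) and $\|L(t)\|^2\le 2\|G(t)\|$, match the paper.
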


\begin{proof}
(i) By the identity (\ref{kernel11}), for any $u,v\in\mathbf{h}$, we
have for almost every $t\ge 0$
\begin{align}
\|\eta_t(u,v)\|^2 &=\left\langle u,\mathcal{L}(t)
(|v><v|)u\right\rangle
  -\overline{\langle u,v\rangle}\left\langle u,G(t)v\right\rangle
  -\overline{\left\langle u, G(t)v\right\rangle}\langle u,v\rangle
   \label{etanorm}
\end{align}
and thus
\begin{align*}
 & \sum_{k }\| e_k\otimes \eta_t(e_k,v)\|^2=\sum_{k}\|\eta_t(e_k,v)\|^2\\
 &=\sum_{k}\left[\left\langle e_k,\mathcal{L}(t)(|v><v|)e_k\right\rangle
   -\overline{\left\langle e_k,v\right\rangle}\left\langle e_k,G(t)v\right\rangle
   -\overline{\left\langle e_k,G(t)v\right\rangle}\left\langle e_k,v\right\rangle\right]\\
&=\mathrm{Tr}\left(\mathcal{L}(t)(|v><v|)\right)
   -\left\langle v,G(t)v\right\rangle
   -\overline{\left\langle v,G(t)v\right\rangle}.
\end{align*}
Moreover, since $Z_{s,t}$ is trace preserving it follows that $
\mathrm{Tr}\left(\mathcal{L}(t)(|v><v|)\right)=0. $  Therefore $
   \sum_{k }\| e_k\otimes \eta_t(e_k,v)\|^2=
   -2  Re\left\langle v,G(t)v\right\rangle.$ This implies  that  $\sum_{k } e_k\otimes
\eta_t(e_k,v)$   is
 convergent  in norm and  in fact for almost every  $t$  it  defines  a  bounded operator
  $L(t):\mathbf h\rightarrow   \mathbf h  \otimes  \mathbf k_t$  given by
  $L(t)v=\sum_{k } e_k\otimes
  \eta_t(e_k,v)$  with \begin{equation}\label{lj*lj11}
\|L(t)v\|^2
 =-2Re\left\langle v,G(t)v\right\rangle.
\end{equation}  The strong
measurability of   $t\mapsto  L(t)$ follows from the definition.
\\
The part (ii)  follows from the essential  norm boundedness  of
$G(.).$

\end{proof}
%

\section{Hudson-Parthasarathy (HP) Evolution Systems and Equivalence}

\subsection{HP Evolution Systems}\label{subsec:HP evolution aystems}
 In order to  simplify the discussion of the existence  and
uniqueness of the solution of HP type  quantum stochastic
differential equation in  $\Gamma_{sym}(\int_{\mathbb R_+} ^\oplus
\mathbf k_s ds)$ and to be able  to refer to  existing literature,
it is convenient to introduce the following point of view which
allow us to embed the process in the standard Fock space
$\Gamma=\Gamma_{sym}(L^2(\mathbb R_+, \mathbf k))$  where $\mathbf
k=l^2(\mathbb{N}).$

Note that for almost every  $t\ge 0$, $\mathbf{k}_t$ is a complex
separable Hilbert space. Setting $d(t)=$ the dimension of
$\mathbf{k}_t$, $d:\mathbb{R}_+\rightarrow\mathbb{N} \cup
\{\infty\}$ is measurable and defining $ \Lambda_n=\{t:d(t)=n\}$, $\mathbb{R}_+$
can be written as disjoint union $\bigcup_{n=1}^\infty \Lambda_n$ of
measurable sets. Let us consider the Hilbert space $l^2(\mathbb{N})$
with a fixed orthonormal basis $\{E_j:j\ge 0\}$. Now for $t\in
\Lambda_n$, $n<\infty$ we embed $\mathbf{k}_t$ as the $n$
dimensional subspace $Span\{E_j:1\le j\le n\}$ of $\mathbf{k}$ and
for $t\in \Lambda_\infty$, $\mathbf{k}_t$ identified with
$\mathbf{k}$. Then the direct integral
$\int_{\mathbb{R}_+}^{\oplus}\mathbf{k}_t dt=\bigoplus _{n\ge 1}
L^2(\Lambda_n, \mathbb C^{n})\subseteq
L^2(\mathbb{R}_+,\mathbf{k})$. If $\Lambda_{\infty} = \emptyset$ , then  $\int_{\mathbb{R}_+}^{\oplus}\mathbf{k}_t dt$
  is isomprphic to $L^2(\mathbb R_+,\mathbb C^n) $ for some $n.$ 

For any subset $\mathbf{D}\subseteq  L^2(\mathbb R_+, \mathbf k)$,
let $\mathcal{E}(\mathbf{D})$ be the subspace of $\Gamma$ which is
spanned by the set $\left\{\textbf{e}(f):f\in \mathbf{D}\right\}$ of
exponential vectors defined as:
\[
\textbf{e}(f):=\oplus_{n\ge 0}\frac{f^{\otimes n}}{\sqrt{n!}}.
\]

For $0\le s<t<\infty$ and
$f\in\mathcal{K}=L^2(\mathbb{R}_+,\mathbf{k})$, we denote the
functions $1_{[0,s]}f$, $1_{(s,t]}f$ and $1_{[t,\infty)}f$ by
$f_{s]}$, $f_{(s,t]}$ and $f_{[t}$, where $1_A$ is the indicator
function of $A\subset[0,\infty)$. Then the Hilbert spaces
$\mathcal{K}$ and $\Gamma$ can be decomposed as
$\mathcal{K}=\mathcal{K}_{s]}\oplus\mathcal{K}_{[s,t)}\oplus\mathcal{K}_{[t}$
and $\Gamma=\Gamma_{s]}\otimes\Gamma_{[s,t)}\otimes\Gamma_{[t}$ via
the unitary isomorphism given by:
\[
\Gamma\ni\textbf{e}(f)\quad \longleftrightarrow \quad
\textbf{e}(f_{s]})\otimes\textbf{e}(f_{(s,t]})\otimes \textbf{e}(f_{[t})
\in\Gamma_{s]}\otimes\Gamma_{[s,t)}\otimes\Gamma_{[t},
\]
where $\mathcal{K}_{s]}=L^2([0,s), \mathbf k)$,
$\mathcal{K}_{[s,t)}=L^2([s,t), \mathbf k)$,
$\mathcal{K}_{[t}=L^2([t, \infty), \mathbf k)$ and
$\Gamma_{s]}=\Gamma(\mathcal{K}_{s]})$,
$\Gamma_{[s,t)}=\Gamma(\mathcal{K}_{[s,t)})$,
$\Gamma_{[t}=\Gamma(\mathcal{K}_{[t})$.

Let us consider the Hudson-Parthasarathy (HP) type equation on $\mathbf{h}\otimes\Gamma$:
\begin{equation}\label{hpeqn}
V_{s,t}
 =1_{\mathbf{h}\otimes\Gamma}
   +\sum_{\mu,\nu\ge 0}\int_s^t V_{s,\tau} L_\nu^\mu(\tau)\Lambda_\mu^\nu(d\tau).
\end{equation}
Here the coefficients $L_\nu^\mu(\tau)$ ($\mu,\nu\ge 0$) are operators in $\mathbf{h}$  and
$\Lambda_\mu^\nu(t)$ are fundamental processes define by
\begin{equation} \label{Lmunu}
\Lambda_\mu^\nu(t)
=\left\{\begin{array}{lll}
 & t1_{\mathbf{h}\otimes\Gamma} & \mbox{for}~ (\mu,\nu)=(0,0), \\
 & a\left(1_{[0,t]}\otimes E_j(t)\right) &  \mbox{for}~ (\mu,\nu)=(j,0),\\
 & a^\dag\left(1_{[0,t]}\otimes E_k (t)\right) & \mbox{for}~ (\mu,\nu)=(0,k),\\
 & \Lambda\left(1_{[0,t]}\otimes |E_k(t)><E_j(t)|\right) & \mbox{for}~ (\mu,\nu)=(j,k),
 \end{array}\right.
\end{equation}
where $E_j(t)=E_j$ for $j\in\{1,2,\cdots d(t)\}$ and $E_j(t)=0$
otherwise. With respect to the orthonormal basis $E_j(t)$  we have
bounded operators $\{L_j(t):t\ge 0,j\ge 1\}$ in $\mathbf{h}$
 such that \be \label{ljjt} \langle u,L_j(t) v \rangle=\langle E_j,\eta_t(u,v) \rangle, \forall
 u,v\in\mathbf{h}.\ee
For detail  about quantum stochastic calculus see \cite{krp, gs}).
%


Now, let us state the main result of this article.
\begin{theorem}\label{mainthm}
Under Assumptions \textbf{A, B, C} and \textbf{D}, we have the following.
\begin{itemize}
  \item [(i)] The HP type equation
\begin{equation}\label{hpeqn,st11}
V_{s,t}
 =1_{\mathbf{h}\otimes \Gamma}
  +\sum_{\mu,\nu\ge 0}\int_s^t V_{s,r} L_\nu^\mu(r) \Lambda_\mu^\nu(dr)
\end{equation}
on $\mathbf{h}\otimes\Gamma_{\mathrm{sym}}(\mathcal{K})$ with coefficients $L_\nu^\mu(t)$ given by
\begin{equation} \label{hpcoefi11}
L_\nu^\mu(t)=\left\{ \begin{array} {lll}
 & G(t) & \mbox{for}~ (\mu,\nu)=(0,0),\\
 &L_j(t) & \mbox{for}~ (\mu,\nu)=(j,0),\\
 & -L_k(t)^*& \mbox{for}~ (\mu,\nu)=(0,k),\\
 & 0 & \mbox{for}~ (\mu,\nu)=(j,k)
 \end{array}
 \right.
  \end{equation}
admit a unique unitary solution $V_{s,t}$.
  \item [(ii)] There exists a unitary isomorphism
$\widetilde{\Xi}:\mathbf{h}\otimes\mathcal{H}\rightarrow \mathbf{h}\otimes\Gamma$ such that
\begin{equation}\label{U=V}
U_{s,t}=\widetilde{\Xi}^*V_{s,t}\widetilde{\Xi},\qquad \forall\,0\le s\le t<\infty.
\end{equation}
\end{itemize}
\end{theorem}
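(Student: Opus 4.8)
The plan is to first establish part (i) by verifying the classical Hudson--Parthasarathy conditions for the existence of a unique unitary solution, and then to build the isomorphism $\widetilde\Xi$ in part (ii) by matching vacuum-conditioned moments of $\{U_{s,t}\}$ with the corresponding quantum stochastic expectations of $\{V_{s,t}\}$ on the total set $\mathcal S_0$.

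For part (i), the coefficients in \eqref{hpcoefi11} are (by Lemma \ref{H,Lj11}(ii) and Theorem \ref{Ttt}) essentially norm bounded and strongly measurable, with no conservation term. I would check that the formal unitarity conditions hold for almost every $t$: namely $L^0_0(t)+L^0_0(t)^* + \sum_{j\ge1} L^0_j(t)^* L^0_j(t) = 0$, which here reads $G(t)+G(t)^* + \sum_j L_j(t)^*L_j(t)=0$; this is exactly the identity $\|L(t)v\|^2 = -2\,\mathrm{Re}\langle v,G(t)v\rangle$ from \eqref{lj*lj11} together with $L(t)v=\sum_k e_k\otimes\eta_t(e_k,v)$ and \eqref{ljjt}. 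Since there is no gauge part, the remaining unitarity relations ($\sum_j L^0_j{}^* L^k_j + L^k_0 + \ldots$) are vacuous or trivially satisfied. Local essential boundedness of the coefficients then gives, by the standard theory (Hudson--Parthasarathy, and as quoted via \cite{krp,gs}), a unique solution $\{V_{s,t}\}$ that is a unitary evolution system.

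For part (ii), the key computation is that the coefficients have been engineered precisely so that the quantum stochastic differential equation for the vacuum-conditioned quantities of $V_{s,t}$ reproduces Theorems \ref{Ttt}, \ref{Ztt} and the kernel formula \eqref{kernel11}. Concretely, I would show: (a) $\langle e(0)\otimes u, V_{s,t}\, e(0)\otimes v\rangle = \langle u, T_{s,t} v\rangle$, since applying the first fundamental formula to $V_{s,t}$ on exponential vectors $e(0)$ kills all the $a,a^\dagger$ terms and leaves $\frac{d}{dt}\langle u,\cdot\, v\rangle = \langle u, T_{s,t}G(t)v\rangle$, which by Theorem \ref{Ttt} characterizes $T_{s,t}$ uniquely; (b) more generally, for product exponential vectors the vacuum expectations of products $V^{(\underline\epsilon)}_{\underbar s,\underbar t}(\underbar u,\underbar v)$ satisfy the same differential/integral recursions — governed by $G$, $\mathcal L$, and the $\eta_s(u,v)$ — as the corresponding quantities for $U$, using the independence of increments (Assumption A2), the product structure of the Fock space over disjoint intervals, and the derivative formulas \eqref{eqn:assumption T}, Theorem \ref{Ztt}, and \eqref{kernel11}. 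Then I would define $\widetilde\Xi$ on the total set $\mathcal S_0$ (Assumption D) by
\[
\widetilde\Xi\bigl(h\otimes U_{\underbar s,\underbar t}(\underbar u,\underbar v)\Omega\bigr)
 := h\otimes V_{\underbar s,\underbar t}(\underbar u,\underbar v)e(0),
\]
where $V_{\underbar s,\underbar t}(\underbar u,\underbar v)$ is the analogous iterated "slice" of $V$, and check that it is inner-product preserving by the moment-matching in (b); it extends to a unitary $\mathbf h\otimes\mathcal H \to \mathbf h\otimes\Gamma$ because $\{V_{\underbar s,\underbar t}(\underbar u,\underbar v)e(0)\}$ is total in $\Gamma$ (an exponential-domain density argument, using that the $\eta_s(u,v)$ span $\mathbf k_s$ and hence the associated first-chaos vectors are dense). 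Finally the intertwining relation $U_{s,t}=\widetilde\Xi^* V_{s,t}\widetilde\Xi$ is checked on $\mathcal S_0$ directly from the evolution property (A1) and the cocycle identity for $V$.

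The main obstacle is step (b): setting up the bookkeeping for the iterated vacuum expectations over several ordered time intervals and verifying that the Fock-space quantities obey exactly the same closed system of ordinary integral equations (in the endpoints $s_k,t_k$) as the $U$-side quantities. This requires carefully exploiting Assumption A2(ii) to factorize across disjoint intervals on the $\mathcal H$ side, matching it with the tensor factorization $\Gamma=\Gamma_{s]}\otimes\Gamma_{[s,t)}\otimes\Gamma_{[t}$ on the Fock side, and then invoking the uniqueness clauses of Theorems \ref{Ttt} and \ref{Ztt} together with \eqref{kernel11} and Lemma \ref{4Ut-111} to conclude that solutions of these systems agree. Once this identity of moments is in hand, positivity/isometry of $\widetilde\Xi$ and the density of the range are comparatively routine.
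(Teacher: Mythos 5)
Your proposal follows essentially the same route as the paper. Part (i) is handled identically: essential boundedness and measurability of $G(\cdot)$ and the $L_j(\cdot)$ (from Lemma \ref{H,Lj11} and Theorem \ref{Ttt}), plus the formal unitarity relation $G+G^*+\sum_j L_j^*L_j=0$ coming from \eqref{lj*lj11}, and then an appeal to the standard existence--uniqueness theory for HP equations with locally bounded time-dependent coefficients. Part (ii) is also the same in outline: show $\widetilde T_{s,t}=T_{s,t}$ and $\widetilde Z_{s,t}=Z_{s,t}$, conclude that the inner products on $\mathcal S_0$ and $\mathcal S'$ match, prove that $\mathcal S'$ is total in $\Gamma$, and define $\widetilde\Xi=1_{\mathbf h}\otimes\Xi$ by $U_{\underbar s,\underbar t}(\underbar u,\underbar v)\Omega\mapsto V_{\underbar s,\underbar t}(\underbar u,\underbar v)\mathbf e(0)$.

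The one place you are noticeably vaguer than the paper is your step (b), the ``iterated vacuum expectation bookkeeping.'' The paper avoids setting up general closed systems of recursions: it first proves an algebraic lemma that rewrites any two vectors $U_{\underbar s,\underbar t}(\underbar u,\underbar v)\Omega$, $U_{\underbar s',\underbar t'}(\underbar p,\underbar w)\Omega$ as sums of products over a \emph{common refinement} of the time partitions (using $U_{s,t}(u,v)=\sum_j U_{s,r}(u,f_j)U_{r,t}(f_j,v)$ and the evolution property), so that by independence of increments the required identity of inner products reduces purely to a single-interval statement, which in turn is exactly $\widetilde Z_{s,t}=Z_{s,t}$. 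Your version, which tries to match whole families of moment equations directly, would have to contend with the ordering of the $V^{(\epsilon_k)}$-factors and the mixed products of $V$ and $V^*$ over overlapping intervals; the refinement trick sidesteps that entirely. You would also need an explicit argument for totality of $\mathcal S'$: the paper's proof is not an ``exponential-domain density argument'' as you suggest, but a chaos-by-chaos induction (showing $\phi^{(n)}(s_1,\dots,s_n)$ vanishes for a.e.~$\underbar s$ using the limits from Lemma \ref{Mst} and totality of $\{\eta_s(u,v)\}$ in each $\mathbf k_s$); your sketch should be tightened to that. With those two points made precise, the argument is the same as the paper's.
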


Here we shall sketch the proof of   part (i) of the theorem and
postpone that of part  (ii) to the next two sub sections. For
$\underline{\epsilon}=(\epsilon_1,\epsilon_2,\cdots,
\epsilon_n)\in\mathbb{Z}_2^n$, we define
$V_{s,t}^{(\underline{\epsilon})}\in\mathcal{B}(\mathbf{h}^{\otimes
n}\otimes\Gamma)$ by setting
$V_{s,t}^{(\epsilon)}\in\mathcal{B}(\mathbf{h}\otimes\Gamma)$ by
\begin{equation*}
V_{s,t}^{(\epsilon)}
 =\left\{
          \begin{array}{ll}
              V_{s,t} & \hbox{for } \epsilon=0,\\
              V_{s,t}^* & \hbox{for } \epsilon=1.
            \end{array}
         \right.
\end{equation*}

The next result verifies the properties of \textbf{Assumption  A} for the family
$V_{s,t}$ with $\Omega=\textbf{e}(0)\in\Gamma$.

\begin{lemma}\label{Vstbasic1}
The unitary solution $\{ V_{s,t}\}$ of HP equation \eqref{hpeqn,st11} satisfies
\begin{itemize}
  \item[(i)] for any $0\le r\le  s \le t<\infty$, $V_{r,t}=V_{r,s}V_{s,t}$,
\item[(ii)] for $[q,r)\cap [s,t)=\emptyset$, $V_{q,r}(u,v)$ commute with
            $V_{s,t}(p,w)$ and $V_{s,t}(p,w)^*$ for any $u,v,p,w \in \mathbf h$,
\item[(iii)] for any $0\le s\le t<\infty$,
\[
\left\langle\textbf{e}(0),V_{s,t}(u,v)\textbf{e}(0)\right\rangle
=\left\langle u,T_{s,t}v\right\rangle,\qquad \forall\,
u,v\in\mathbf{h}.
\]
\end{itemize}
\end{lemma}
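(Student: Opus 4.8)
\textbf{Proof proposal for Lemma \ref{Vstbasic1}.}

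The plan is to establish the three claims using the standard structure of Hudson--Parthasarathy quantum stochastic calculus, since $\{V_{s,t}\}$ is by hypothesis the unique unitary solution of the qsde \eqref{hpeqn,st11}. For part (i), the cocycle (evolution) property $V_{r,t}=V_{r,s}V_{s,t}$ follows from the uniqueness of the solution: I would fix $r\le s$, set $W_{s,t}:=V_{r,s}V_{s,t}$, observe that by the locality of the fundamental processes $\Lambda_\mu^\nu$ on the interval $(s,t]$ and the factorization $\Gamma=\Gamma_{s]}\otimes\Gamma_{[s,t)}\otimes\Gamma_{[t}$ the operator $V_{r,s}$ acts trivially on the $\Gamma_{(s,\cdot]}$ factor, so that $\{W_{s,t}\}_{t\ge s}$ satisfies the same integral equation \eqref{hpeqn,st11} with initial condition $W_{s,s}=V_{r,s}$; then both $t\mapsto V_{r,t}$ and $t\mapsto W_{s,t}$ solve the HP equation started at time $s$ from the operator $V_{r,s}$, and uniqueness gives $V_{r,t}=W_{s,t}=V_{r,s}V_{s,t}$. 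The identity $V_{s,s}=1_{\mathbf h\otimes\Gamma}$ is immediate from \eqref{hpeqn,st11}.

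For part (ii), the commutation of $V_{q,r}(u,v)$ with $V_{s,t}(p,w)$ and its adjoint when $[q,r)\cap[s,t)=\emptyset$ is again a consequence of locality: when the time intervals are disjoint, say $r\le s$, the operator $V_{q,r}$ lives in $\mathcal B(\mathbf h\otimes\Gamma_{r]})$ (it is built from stochastic integrals supported on $(q,r]$) while $V_{s,t}$ lives in $\mathcal B(\mathbf h\otimes\Gamma_{[s})$; after taking the partial expectation $(\cdot)(u,v)$ and $(\cdot)(p,w)$ against vectors of $\mathbf h$ one obtains operators on $\Gamma$ that act on the tensor factors $\Gamma_{r]}$ and $\Gamma_{[s}$ respectively, which commute because $\Gamma_{r]}\otimes\Gamma_{[r,s)}\otimes\Gamma_{[s}$ is a tensor decomposition and $\Gamma_{r]}\hookrightarrow\Gamma_{r]}\otimes\Gamma_{[r,s)}$, $\Gamma_{[s}\hookrightarrow\Gamma_{[r,s)}\otimes\Gamma_{[s}$ commute inside $\Gamma$. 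The adjoint statement follows the same way since $V_{s,t}^*$ is likewise localized in $\mathcal B(\mathbf h\otimes\Gamma_{[s})$.

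For part (iii), I would compute the vacuum expectation $\langle\mathbf e(0),V_{s,t}(u,v)\mathbf e(0)\rangle$ directly from the qsde: pairing \eqref{hpeqn,st11} against $u\otimes\mathbf e(0)$ and $v\otimes\mathbf e(0)$, only the time (Lebesgue) integral term survives because the creation, annihilation and number integrators $\Lambda_\mu^\nu$ with $(\mu,\nu)\ne(0,0)$ have vanishing vacuum--vacuum matrix elements, and with the coefficient choice \eqref{hpcoefi11} we get the scalar equation $\langle u,\Phi_{s,t}v\rangle = \langle u,v\rangle + \int_s^t \langle u,\Phi_{s,r}G(r)v\rangle\,dr$ where $\Phi_{s,t}$ is defined by $\langle u,\Phi_{s,t}v\rangle=\langle\mathbf e(0),V_{s,r}(u,v)\mathbf e(0)\rangle$; by Theorem \ref{Ttt} the family $T_{s,t}$ satisfies the identical integral equation $T_{s,t}-1=\int_s^t T_{s,r}G(r)\,dr$ with the same initial condition, and since the coefficient $G(\cdot)$ is locally essentially bounded this Volterra equation has a unique $\mathcal B(\mathbf h)$-valued solution, so $\Phi_{s,t}=T_{s,t}$. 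The main obstacle here is the careful justification of interchanging the vacuum expectation with the quantum stochastic integral and the identification of the resulting classical integral equation; this is where I would invoke the first fundamental formula of quantum stochastic calculus (e.g.\ from \cite{krp, gs}) to be rigorous, but no genuinely new idea is needed beyond what is standard in that theory.
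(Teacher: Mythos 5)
Your proposal is correct and follows essentially the same route as the paper's proof: part (i) via uniqueness of the HP qsde, part (ii) via localization of $V_{s,t}$ to $\Gamma_{[s,t]}$ in the continuous tensor product decomposition, and part (iii) by showing that $\langle u,\widetilde T_{s,t}v\rangle:=\langle\mathbf e(0),V_{s,t}(u,v)\mathbf e(0)\rangle$ obeys the same Volterra integral equation as $T_{s,t}$ and then invoking uniqueness. The only cosmetic difference is that the paper makes uniqueness of the Volterra equation explicit by iterating the homogeneous equation for $\widetilde T_{s,t}-T_{s,t}$, whereas you cite it as a standard fact; both are fine.
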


\begin{proof}
(i) For fixed $0\le r\le s\le t<\infty$, we set $W_{r,t}=V_{r,s} V_{s,t}$.
Then by \eqref{hpeqn,st11}, we have
\begin{align*}
W_{r,t}
&=V_{r,s}+\sum_{\mu,\nu\ge 0}\int_s^t V_{r,s}V_{s,q} L_\nu^\mu (\tau) \Lambda_\mu^\nu(d\tau)\\
&=W_{r,s}+\sum_{\mu,\nu\ge 0}\int_s^t   W_{r,q}
L_\nu^\mu(\tau)\Lambda_\mu^\nu(d\tau),
\end{align*}
where $W_{r,s}=V_{r,s}V_{s,s}=V_{r,s}$.
Thus the family $\{ W_{r,t}\}$ of unitary operators also satisfies the HP equation
(\ref{hpeqn,st11}).  Hence by uniqueness of the solution of this quantum
stochastic differential equation, $W_{r,t}=V_{r,t}$ for any $0\le r\le s\le t<\infty$,
and the result follows.

(ii) For any $0\le s\le t<\infty$, $V_{s,t}\in
\mathcal{B}(\mathbf{h}\otimes \Gamma_{[s,t]})$. $p,w\in\mathbf{h}$,
$V_{s,t}p,w\in\mathcal{B}(\Gamma_{[s,t]})$ and the statement
follows.

(iii) Let us define \[ \left\langle
u,\widetilde{T}_{s,t}v\right\rangle:=
\left\langle\textbf{e}(0),V_{s,t}(u,v)\textbf{e}(0)\right\rangle
,\qquad \forall\, u,v\in\mathbf{h}.
\]Then $\widetilde{T}_{s,t}$  is  a contractive  family of operators  and
by the cocycle property of $V_{s,t},$ \be \label{t-t}
\widetilde{T}_{s,t} =1+\int_s^t \widetilde{T}_{s,\tau}G(\tau) d\tau.
\ee  Thus $\widetilde{T}_{s,t} -T_{s,t}$  satisfies the differential
equation
\[ \widetilde{T}_{s,t} -T_{s,t}= \int_s^t
(\widetilde{T}_{s,\tau}-T_{s,\tau})G(\tau) d\tau.
\]  Since $G(\tau)  $  is an essentially norm  bounded  function, an
iteration  of (\ref{t-t}) will leads to  $\widetilde{T}_{s,t}
=T_{s,t}$ for all $s,t.$
\end{proof}
\noindent Consider the family of operators $\widetilde{Z}_{s,t}$
defined by
\[
\widetilde{Z}_{s,t}(\rho)
=\mathrm{Tr}_{\mathcal{H}}
  \left[V_{s,t}(\rho \otimes|\textbf{e}(0)><\textbf{e}(0)|)V_{s,t}^*\right],
\qquad \forall\, \rho \in\mathcal{B}_1(\mathbf{h}).
\]
As for $Z_{s,t}$, it can be seen that $\widetilde{Z}_{s,t}$ is a
contractive family of maps  on $\B_1(\mathbf h)$ and, in particular,
for any $u,v,p,w\in\mathbf{h}$,
\[
\left\langle p, \widetilde{Z}_{s,t}(|w><v|)u\right\rangle
 =\left\langle V_{s,t} (u,v)\textbf{e}(0),V_{s,t}(p,w)\textbf{e}(0)\right\rangle.
\]

\begin{lemma}
The family $\{\widetilde{Z}_{s,t}\}$ is a uniformly continuous evolution
of contraction on $\mathcal{B}_1(\mathbf{h})$ and $\widetilde{Z}_{s,t}=Z_{s,t}$,
where $Z_{s,t}$ is given as in \eqref{eqn:Z-{s,t}}.
\end{lemma}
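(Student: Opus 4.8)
The plan is to identify $\widetilde{Z}_{s,t}$ with $Z_{s,t}$ by first establishing the relevant properties of $\widetilde{Z}_{s,t}$ and then invoking the differential equation that both families satisfy. First I would verify that $\{\widetilde{Z}_{s,t}\}$ is a positive contractive evolution on $\mathcal{B}_1(\mathbf{h})$: positivity and contractivity follow exactly as for $Z_{s,t}$ (the computation with the partial trace and trace norm given just before the statement of the lemma on $Z_{s,t}$ carries over verbatim, with $U_{s,t}$ replaced by $V_{s,t}$ and $\Omega$ by $\textbf{e}(0)$), while the evolution property $\widetilde{Z}_{r,s}\widetilde{Z}_{s,t}=\widetilde{Z}_{r,t}$ follows from the cocycle identity $V_{r,t}=V_{r,s}V_{s,t}$ of Lemma \ref{Vstbasic1}(i) together with the factorization $V_{s,t}\in\mathcal{B}(\mathbf{h}\otimes\Gamma_{[s,t]})$ and the fact that $\textbf{e}(0)$ factorizes across the tensor decomposition $\Gamma=\Gamma_{s]}\otimes\Gamma_{[s,t)}\otimes\Gamma_{[t}$.

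Next I would establish uniform continuity. Using the quantum It\^o formula applied to $V_{s,t}(\rho\otimes|\textbf{e}(0)><\textbf{e}(0)|)V_{s,t}^*$ with coefficients \eqref{hpcoefi11}, one obtains a differential equation
\[
\widetilde{Z}_{s,t}(\rho)-\rho=\int_s^t \widetilde{Z}_{s,\tau}\bigl(\widetilde{\mathcal L}(\tau)\rho\bigr)\,d\tau,
\]
where $\widetilde{\mathcal L}(\tau)$ is the Lindblad-type generator built from $G(\tau)$ and $\{L_j(\tau)\}$, namely $\widetilde{\mathcal L}(\tau)(|w><v|)$ having matrix elements $\langle p,\widetilde{\mathcal L}(\tau)(|w><v|)u\rangle=\sum_j\langle L_j(\tau)p,u\rangle\langle w,L_j(\tau)v\rangle+\langle p,G(\tau)u\rangle\langle w,v\rangle+\langle p,u\rangle\langle G(\tau)w,v\rangle$ up to the usual sign conventions. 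Since $G$ and the $L_j$ are essentially norm bounded by Theorem \ref{Ttt} and Lemma \ref{H,Lj11}, the map $\tau\mapsto\widetilde{\mathcal L}(\tau)$ is essentially bounded in $\mathcal{B}(\mathcal{B}_1(\mathbf{h}))$, which gives $\|\widetilde{Z}_{s,t}-1\|\le C'|t-s|$ and hence, via the evolution property, uniform continuity.

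Finally, to conclude $\widetilde{Z}_{s,t}=Z_{s,t}$, I would show that $\widetilde{\mathcal L}(\tau)=\mathcal L(\tau)$ almost everywhere. By Theorem \ref{Ztt}, $Z_{s,t}-1=\int_s^t Z_{s,\tau}\mathcal L(\tau)\,d\tau$ with $\mathcal L(\tau)=\lim_{h\downarrow 0}(Z_{\tau,\tau+h}-I)/h$. On the other hand, from \eqref{kernel11} we read off (with $\epsilon=\epsilon'=0$) that $\langle p,\mathcal L(s)(|w><v|)u\rangle=\langle p,\mathcal L(s)(|w><v|)u\rangle$ — more usefully, combining \eqref{kernel11} with \eqref{etanorm}, the relation $\langle E_j,\eta_t(u,v)\rangle=\langle u,L_j(t)v\rangle$ from \eqref{ljjt} and $\sum_j\langle L_j(t)p,u\rangle\langle w,L_j(t)v\rangle=\langle\eta_t(p,w),\eta_t(u,v)\rangle$ shows exactly that the generator extracted from the kernel $K_s$ coincides with $\widetilde{\mathcal L}$. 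Thus $\widetilde{Z}_{s,t}$ and $Z_{s,t}$ satisfy the same integral equation with the same (essentially bounded) generator, and their difference $D_{s,t}:=\widetilde{Z}_{s,t}-Z_{s,t}$ satisfies $D_{s,t}=\int_s^t D_{s,\tau}\mathcal L(\tau)\,d\tau$; a Gronwall/iteration argument (exactly as in the proof of Lemma \ref{Vstbasic1}(iii) for $\widetilde{T}_{s,t}=T_{s,t}$) forces $D_{s,t}=0$. The main obstacle is the bookkeeping in the quantum It\^o computation of the generator $\widetilde{\mathcal L}$ and matching its matrix elements against formula \eqref{kernel11} for $\mathcal L(s)$; once that identification is in place, the uniqueness argument is routine.
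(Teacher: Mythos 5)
Your proposal follows essentially the same route as the paper: apply the quantum It\^o formula to the HP solution to derive an integral equation $\widetilde{Z}_{s,t}(\rho)=\rho+\int_s^t\widetilde{Z}_{s,\tau}\mathcal L(\tau)(\rho)\,d\tau$, identify the resulting generator with $\mathcal L(\tau)$ via the relation $\langle u,L_j(\tau)v\rangle=\langle E_j,\eta_\tau(u,v)\rangle$ and formula \eqref{kernel11}, and close with the same iteration/uniqueness argument used for $\widetilde{T}_{s,t}=T_{s,t}$ in Lemma~\ref{Vstbasic1}(iii). One small caveat: your displayed formula for the matrix elements of $\widetilde{\mathcal L}(\tau)$ has the vectors shuffled --- the correct form, as comes out of the paper's It\^o computation, is
\[
\langle p,\mathcal L(\tau)(|w\rangle\!\langle v|)u\rangle=\langle v,u\rangle\langle p,G(\tau)w\rangle+\langle G(\tau)v,u\rangle\langle p,w\rangle+\sum_{j}\langle L_j(\tau)v,u\rangle\langle p,L_j(\tau)w\rangle,
\]
which then matches \eqref{kernel11} exactly; since you explicitly flagged this ``bookkeeping'' as the remaining task, the argument as outlined is sound.
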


\begin{proof}
By \eqref{hpeqn,st11} and Ito's formula, for $u,v,p,w\in\mathbf{h}$
\begin{align*}
&\left\langle
p,\left[\widetilde{Z}_{s,t}-1\right](|w><v|)u\right\rangle
=\left\langle
V_{s,t}(u,v)\textbf{e}(0),V_{s,t}(p,w)\textbf{e}(0)\right\rangle
   -\overline{\left\langle u,v\right\rangle}\left\langle p,w\right\rangle\\
&\quad=\int_s^t \left\langle
V_{s,\tau}(u,v)\textbf{e}(0),V_{s,\tau}(p,G(\tau)
z)\textbf{e}(0)\right\rangle d\tau
  +\int_s^t \left\langle V_{s,\tau}(u,G(\tau) v)\textbf{e}(0),V_{s,\tau}(p,w)\textbf{e}(0)\right\rangle d\tau\\
&\qquad\qquad+\int_s^t
  \left\langle V_{s,\tau}(u,L_j(\tau)v)\textbf{e}(0),V_{s,\tau}(p,L_j(\tau)z)\textbf{e}(0)\right\rangle d\tau\\
&\quad=\int_s^t \left\langle p,\widetilde{Z}_{s,\tau}(|G(\tau)
w><v|)u\right\rangle d\tau
  +\int_s^t\left\langle p,\widetilde{Z}_{s,\tau} (|w><G(\tau) v|)u\right\rangle d\tau\\
&\qquad\qquad+\sum_{j\ge 1}\int_s^t\left\langle
p,\widetilde{Z}_{s,\tau}(|L_j(\tau)w><L_j(\tau)v|)u\right\rangle
d\tau.
\end{align*}
Thus by identity   (\ref{ljjt}) for $\{L_j(t)\},$   we have that
\begin{equation}\label{Ztilda}
\left\langle
p,\left[\widetilde{Z}_{s,t}-1\right](\rho)u\right\rangle
 =\int_s^t \left\langle p,\widetilde{Z}_{s,\tau}\mathcal{L}(\tau)(\rho)u\right\rangle d\tau,
\end{equation}
where $\rho=|w><v|$. Thus the family $\{\widetilde{Z}_{s,t}\}$
satisfies the differential equation
\[
\widetilde{Z}_{s,t}(\rho) =\rho+\int_s^t
\widetilde{Z}_{s,\tau}\mathcal{L}(\tau)(\rho)d\tau,\qquad
\rho\in\mathcal{B}_1(\mathbf{h}).
\]
Therefore, proceeding as in the proof of Lemma \ref{Vstbasic1} (iii)
we can conclude that $\widetilde{Z}_{s,t}=Z_{s,t}$.
\end{proof}
%

\subsection{Minimality of HP Evolution Systems}

In this section we shall show the minimality of the HP evolution system $\{V_{s,t}\}$
discussed in Section \ref{subsec:HP evolution aystems}
which will be needed to prove (ii) in Theorem \ref{mainthm},
i.e., to establish unitary equivalence of $U_{s,t}$ and $V_{s,t}$.
We shall prove here that the subset
\[
 \mathcal{S}^\prime
 =\left\{V_{\underbar{s},\underbar{t}}(\underbar{u},\underbar{v})\textbf{e}(0)\,:\,
 \begin{array}{l}
   \underbar{s}=(s_1,s_2, \cdots, s_n),~\underbar{t}=(t_1,t_2,\cdots, t_n)~\textrm{with}~0\le \underbar{s},\underbar{t}<\infty,\\
   \underbar{u}=\otimes_{i=1}^nu_i,\underbar{v}=\otimes_{i=1}^n v_i\in \mathbf{h},~~ n\ge1
 \end{array}
 \right\}
\]
is total in the symmetric Fock space $\Gamma(\mathcal{K})\subseteq \Gamma(L^2(\mathbb{R}_+,\mathbf{k}))$,
where
\[
V_{\underbar{s},\underbar{t}}(\underbar{u},\underbar{v})\textbf{e}(0)
:=V_{s_1,t_1}(u_1,v_1)\cdots V_{s_n,t_n}(u_n,v_n)\textbf{e}(0).
\]

Let $\tau\ge0$ be fixed and as in (Ref. \cite{SSS}), we note that for any $0\le s < t\le \tau$,
$u,v\in \mathbf{h}$,
\begin{align}\label{vst-1}
\frac{1}{t-s}\left[V_{s,t}-1\right](u,v)\textbf{e}(0)
 =\gamma(s,t,u,v)+\rho(s,t,u,v)+\zeta(s,t,u,v)+\varsigma(s,t,u,v),
\end{align}
where these vectors in the Fock space $\Gamma$ are given by
\begin{align*}
 \gamma(s,t,u,v)
   &:=\frac{1}{t-s}\sum_{j\ge 1} \int_s^t \left\langle u,L_j(\lambda)v\right\rangle a_j^\dag(d \lambda)\,\textbf{e}(0),\\
   \rho(s,t,u,v)&:=\frac{1}{t-s}\int_s^t  \left\langle
u,G(\lambda)v\right\rangle
d\lambda~~\textbf{e}(0),\\
\zeta(s,t,u,v)
   &:=\frac{1}{t-s}\sum_{j\ge 1}\int_s^t\left(V_{s,\lambda}-1\right)\left(u,L_j(\lambda)v\right)a_j^\dag(d\lambda)\,\textbf{e}(0),\\
\varsigma(s,t,u,v)
   &:=\frac{1}{t-s}\int_s^t\left(V_{s,\lambda}-1\right)\left(u,G(\lambda) v\right)d\lambda\,\textbf{e}(0).
\end{align*}
Note that any $\phi\in\Gamma$ can be written as
$\phi=\phi^{(0)}\oplus\phi^{(1)}\oplus\cdots$, where $\phi^{(n)}$ is
in the $n$-fold symmetric tensor product
$L^2(\mathbb{R}_+,\mathbf{k})^{\otimes_s n}\equiv L^2(\Sigma_n)
\otimes \mathbf{k}^{\otimes n}$. Here $\Sigma_n$ is the $n$-simplex
$\{\underbar{t}=(t_1,t_2,\cdots, t_n):0\le t_1 <t_2\cdots
<t_n<\infty\}$.

\blema \label{qs-esti} Let $ u,v\in \mathbf h$  and let $ C_{\tau}=
4 e^\tau \sup\{
  \|L(\lambda) \|^2 +  \|G(\lambda)\|^2:0\le   \lambda \le  \tau\}.$
  Then  for any $
0\le s\le t\le \tau,$

{ (i)} \be \|(V_{s,t}-1) v \textbf{e}(0)\|^2\le  C_{\tau} |t-s|
\|v\|^2.\ee

{(ii)} For any $u\in \mathbf h$ \bean &&\|\sum_{j\ge 1} \int_s^t
V_{s,\lambda} (u,L_j(\lambda)v) a_j^\dag (d\lambda) \textbf{e}(0)\|^2\\
&&\le \| u\|^2 \| \sum_{j\ge 1}  \int_s^t V_{s,\lambda} L_j(\lambda)
~d\lambda ~~ v \otimes  \textbf{e}(0)\|^2.\eean \elema
\begin{proof}{(i)} By
estimates of   quantum stochastic integration  (Proposition 27.1,
\cite{krp}) \bean
&&\|(V_{s,t}-1) v \textbf{e}(0)\|^2\\
&&=\| \sum_{j\ge 1}\int_s^t V_{s,\lambda} L_j(\lambda) a_j^\dag
(d\lambda) ~~v \textbf{e}(0)+ \int_s^t V_{s,\lambda}G(\lambda) d
\lambda ~~v \textbf{e}(0)\|^2\\
&&\le 2 e^\tau \int_s^t\{  \sum_{j\ge 1}\|L_j  (\lambda)v\|^2
+\|G(\lambda)v\|^2\} d
\lambda \\
&&\le 2 e^\tau\|v\|^2  \int_s^t\{  \|L (\lambda)\|^2
+\|G(\lambda)\|^2\} d
\lambda \\
 && =  \|v\|^2  C_{\tau} |t-s|.\eean
 {(ii)} For  any  $\phi$ in the Fock space $\Gamma(L^2(\mathbb R_+, \mathbf k))$,
\bean && \langle \phi,\sum_{j\ge 1} \int_s^t V_{s,\lambda}
(u,L_j(\lambda) v)
a_j^\dag (d\lambda) \textbf{e}(0)\rangle |^2\\
&&=|\langle u\otimes\phi,\{ \sum_{j\ge 1} \int_s^t
V_{s,\lambda}L_j(\lambda)
a_j^\dag (d\lambda) \}v  \textbf{e}(0)\rangle |^2\\
&&\le \| u\otimes\phi\|^2 \|\{ \sum_{j\ge 1} \int_s^t
V_{s,\lambda}L_j(\lambda) a_j^\dag (d\lambda)\} v  \textbf{e}(0)
\|^2. \eean Since $\phi$ is arbitrary, the first  inequality
follows. Thus further by the estimates of quantum stochastic
integration
\[\|\sum_{j\ge 1} \int_s^t
V_{s,\lambda} (u,L_j(\lambda)v) a_j^\dag (d\lambda)
\textbf{e}(0)\|^2 \le 2 e^\tau \| u\|^2   \int_s^t  \sum_{j\ge
1}\|V_{s,\lambda}  L_j(\lambda)v \|^2 d \lambda\] \be \label{vla+}
\le 2 e^\tau \| u\|^2
\int_s^t \|L(\lambda)v \|^2\\
d \lambda\\
 \le |t-s| \|u\|^2 \|v\|^2  C_{\tau}. \ee

\end{proof}
 \blema \label{lemma-vst-1} Let  $C_{\tau}^\prime =4 e^{2 \tau}\sup\{
  \|(L(\alpha )\otimes  1) L(\lambda) \|^2 +  \|(G(\alpha)\otimes  1)~L(\lambda) \|^2   :  \alpha, \lambda \in [0,\tau]\} $  and
   $C_{\tau}^{\prime \prime} =4 e^{2 \tau}\sup\{
  \|(L(\alpha )\otimes  1) G(\lambda) \|^2 +  \|(G(\alpha)\otimes  1)~G(\lambda) \|^2   :  \alpha, \lambda \in [0,\tau]\}. $ Then for any $ u,v\in
\mathbf h, 0\le s\le t\le \tau$
\begin{itemize}
\item[{ (i)}] $\|(V_{s,t}-1)(u,v)~~\textbf{e}(0)\|^2\le C_{\tau}\|u\|^2
\|v\|^2 |t-s|.$\\

\item[{ (ii)}] $\sup\{ \|\zeta(s,t,u,v) \|^2:0\le s\le t \le \tau\} \le  C_\tau^\prime  \|u\|^2
  \| v\|^2 $
and\\
 $\|\varsigma(s,t,u,v)\|\le  \sqrt{ C_{\tau}^{\prime \prime} |t-s|}  \|u\|
  \| v\|.$\\

\item[{(iii)}]For any $\phi\in \Gamma(L^2(\mathbb R_+, \mathbf k)),$
~ $\lim_{t \downarrow s}\langle \phi,  \zeta(s,t,u,v)\rangle =0$ and
\[\lim_{t \downarrow s} \langle \phi, \gamma(s,t,u,v)\rangle =\sum_{j\ge 1}
\langle u,L_j(s)v\rangle \overline{\phi^{(1)}_j} (s)= \langle
\phi^{(1)} (s), \eta_s(u,v)\rangle,~~\mbox{a.e.}~~ s\ge 0.\]
\end{itemize}
\elema

\begin{proof} { (i)}  By    (\ref{hpeqn,st11}) and (\ref{vla+}) we have
   \bean
  && \|(V_{s,t}-1)(u,v)~~\textbf{e}(0)\|^2 \\
   &&=\|\sum_{j\ge 1} \int_s^t
  V_{s,\lambda}(u,L_j(t) v) a_j^\dag(d\lambda)~\textbf{e}(0)+ \int_s^t
  V_{s,\lambda}(u, G(\lambda) v)~\textbf{e}(0) d\lambda\|^2\\
  && \le 2 \|\sum_{j\ge 1} \int_s^t
  V_{s,\lambda}(u,L_j(\lambda) v) a_j^\dag(d\lambda)~~\textbf{e}(0)\|^2+[\int_s^t
  \|V_{s,\lambda}(u, G(\lambda) v)~~\textbf{e}(0)\| d\lambda]^2\\
   && \le 4 e^\tau \|u\|^2  \|v\|^2  \int_s^t [
  \|L(\lambda)\|^2 +  \|G(\lambda)\|^2 ] d \lambda \\
     && \le C_{\tau} \|u\|^2 \|v\|^2 |t-s|.
  \eean

\noindent { (ii) }  By  inequalities (\ref{vla+})  we have
  \bean
  && \|\zeta(s,t,u,v)\|^2=\frac{1}{|t-s|^2}\|\sum_{j\ge 1} \int_s^t
  (V_{s,\lambda}-1)(u,L_j(\lambda)v) a_j^\dag (d\lambda) ~~\textbf{e}(0)\|^2\\
    && \le  \frac{ 2 e^\tau \|u\|^2}{|t-s|^2}\int_s^t  \sum_{j\ge 1} \|
 (V_{s,\lambda}-1)L_j(\lambda)v ~~\textbf{e}(0)\|^2 d\lambda.
 \eean
   Now as in  Lemma \ref{qs-esti} (i),  the above quantity can be
   estimated by
 \bean
     && \le  \frac{4 e^{2\tau} \|u\|^2}{|t-s|^2}\int_s^t\sum_{j\ge 1}
     \{ \int_s^\lambda \sum_{i\ge 1}
  \|L_i(\alpha ) L_j(\lambda) v\|^2 +   \|G(\alpha)~L_j(\lambda) ~v\|]^2 \} d \alpha ~d \lambda\\
   && \le  \frac{4 e^{2\tau} \|u\|^2}{|t-s|^2}\int_s^t
      \int_s^\lambda\{
  \|(L(\alpha )\otimes  1) L(\lambda) v\|^2 +  \|(G(\alpha)\otimes  1)~L(\lambda) ~v\|^2 \} d \alpha~ d \lambda,
  \eean  which leads to the  statement.

\noindent  Also we have
  \bean
  && \|\varsigma(s,t,u,v)\|=\frac{1}{|t-s|}\| \int_s^t
  (V_{s,\lambda}-1)
(u,G (\lambda)v) d \lambda ~~\textbf{e}(0)\|\\
    && \le  \frac{1}{|t-s|} \int_s^t
  \|(V_{s,\lambda}-1)
(u,G(\lambda)v)~~\textbf{e}(0)\| d\lambda.
  \eean
Thus, similarly  as above,  the estimate follows.

\noindent  { (iii)} For any $f\in L^2(\mathbb R_+,\mathbf k).$
 Let us consider
 \bean
 &&\langle
\textbf{e}(f),\zeta(s,t,u,v)\rangle=\langle
\textbf{e}(f),\frac{1}{t-s} \sum_{j\ge 1}\int_s^t (V_{s,\lambda}-1)
(u,L_j(\lambda)v)
a_j^\dag (d\lambda) ~~ \textbf{e}(0)\rangle\\
&&=\frac{1}{t-s} \sum_{j\ge 1}\int_s^t \overline{f_j(\lambda)}
\langle \textbf{e}(f),(V_{s,\lambda}-1) (u,L_j(\lambda)v)  ~~
\textbf{e}(0)\rangle  d\lambda\\
&&=\frac{1}{t-s} \int_s^t R(s,\lambda) d\lambda, \eean where
$G(s,\lambda)= \sum_{j\ge 1} \overline{f_j(\lambda)} \langle
\textbf{e}(f),(V_{s,\lambda}-1) (u,L_j(\lambda)v)  ~~ \textbf{e}(0)\rangle. $
 Note that the complex valued function
$R(s,\lambda)$ is locally integrable in $\lambda$ and continuous in $s $
 and  and therefore it makes sense  to talk about $R(s,s)$  which is $0.$ So we get
\[\lim_{t \downarrow s} \langle
\textbf{e}(f),\zeta(s,t,u,v)\rangle=0.\] Since  $\zeta(s,t,u,v)$ is
uniformly bounded in $s,t$
\[\lim_{t \downarrow s} \langle
\phi,\zeta(s,t,u,v)\rangle=0, \forall \phi \in \Gamma.\]

 We also have
 \be \label{xi1}  \langle \phi,
\gamma(s,t,u,v)\rangle = \frac{1}{t-s}\sum_{j\ge 1} \int_s^t \langle
u,L_j(\lambda)v\rangle \overline{\phi^{(1)}_j} (\lambda )
d\lambda.\ee Since
\[|\sum_{j\ge 1} \langle
u,L_j (\lambda ) v\rangle \overline{\phi^{(1)}_j} (\lambda ) |^2\le
\|u\|^2 \sum_{j\ge 1} \|L_j  (\lambda )v\|^2 |\phi^{(1)}_j (\lambda
)|^2\le C_\tau \|v\|^2 \|\phi^{(1)} (\lambda)\|^2,\] the function
$\sum_{j\ge 1} \langle u,L_j  (\lambda ) v\rangle
\overline{\phi^{(1)}_j} (\lambda)$ is in $ L^2$ and hence locally
integrable. Thus we get
\[\lim_{t \downarrow s} \langle \phi,
\gamma(s,t,u,v)\rangle=\sum_{j\ge 1} \langle u,L_j(s)v\rangle
\overline{\phi^{(1)}_j} (s)=\langle \phi^{(1)} (s),
\eta_s(u,v)\rangle ~~\mbox{a.e.}~~ s\ge 0.\]

\end{proof}
\blema \label{Mst}
 For $n\ge 1, ~ \underbar{s}\in \Sigma_n$  and $u_k,v_k\in
\mathbf h:k=1,2,\cdots, n, \phi \in \Gamma(L^2(\mathbb R_+,\mathbf
k))$ and  disjoint $[s_k,t_k),$
\begin{itemize}
\item[(i)] $\lim_{\underbar{t} \downarrow \underbar{s}}\langle \phi,
\prod_{k=1}^n M(s_k,t_k,u_k,v_k)~~\textbf{e}(0) \rangle=0,$ \\
where $M(s_k,t_k,u_k,v_k)=\frac{(V_{s_k,t_k}-1)}{t_k-s_k}(u_k,v_k)-
\rho(s_k,t_k,u_k,v_k)-\gamma(s_k,t_k,u_k,v_k)$  and
$\lim_{\underbar{t} \downarrow \underbar{s}}$  means $t_k\downarrow
s_k$ for each $k.$

\item[(ii)]
$\lim_{\underbar{t} \downarrow \underbar{s}}\langle \phi,
\otimes_{k=1}^n \gamma(s_k,t_k,u_k,v_k) \rangle=\langle
\phi^{(n)}(s_1,s_2,\cdots, s_n), \eta_{s_1}(u_1,v_1) \otimes\cdots
\otimes \eta_{s_n}(u_n,v_n) \rangle.$
\end{itemize}
 \elema
 \begin{proof}
 {(i)} First note that
$M(s,t,u,v)\textbf{e}(0)=\zeta(s,t,u,v)+~ \varsigma(s,t,u,v).$
 So by the above observations in Lemma \ref{lemma-vst-1},
 $\{M(s,t,u,v) \textbf{e}(0)\}$ is  uniformly bounded in $s,t\le \tau$
 and\\
$\lim_{t \downarrow s} \langle \textbf{e}(f)
,M(s,t,u,v)\textbf{e}(0)\rangle=0, \forall f\in L^2(\mathbb
R_+,\mathbf k).$ Since the intervals $[s_k,t_k)$'s are disjoint for
different $k$'s,
\[\langle \textbf{e}(f), \prod_{k=1}^n M(s_k,t_k,u_k,v_k)~~\textbf{e}(0)
\rangle= \prod_{k=1}^n \langle \textbf{e}(f_{[s_k,t_k)}),
M(s_k,t_k,u_k,v_k)~~\textbf{e}(0) \rangle\] and thus
$\lim_{\underbar{t} \downarrow \underbar{s}}\langle \textbf{e}(f),
\prod_{k=1}^n M(s_k,t_k,u_k,v_k)~~\textbf{e}(0) \rangle=0.$\\
 Since  $\prod_{k=1}^n
M(s_k,t_k,u_k,v_k)~~\textbf{e}(0)$ is uniformly bounded  in
$s_k,t_k$
 requirement follows for  $\phi\in \Gamma.$

\noindent {(ii) } It can be proved similarly as part {(iii) } of the
previous Lemma.

\end{proof}

\begin{lemma}
Let $\phi\in \Gamma$ be such that
\begin{equation}\label{ortho}
\left\langle \phi,\psi\right\rangle=0,\qquad \forall~ \psi\in\mathcal{S}^\prime.
\end{equation}
Then we have
\begin{itemize}
\item[(i)] $\phi^{(0)}=0$ and $\phi^{(1)}=0,$
\item[(ii)] for any $n\ge 0$, $\phi^{(n)}=0,$
\item[(iii)] the set $\mathcal{S}^\prime$ is total in the Fock space $\Gamma$.
\end{itemize}
\end{lemma}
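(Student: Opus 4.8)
The plan is to prove the three parts in order, using an induction on the particle number $n$ that feeds each stage into the next.

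For part (i), I would start from the orthogonality hypothesis \eqref{ortho}. Taking $\psi = \textbf{e}(0)$ (the case $n=0$, empty product) gives $\langle \phi, \textbf{e}(0)\rangle = 0$, i.e. $\phi^{(0)} = 0$. Next I would take $n=1$ and examine $\langle \phi, V_{s,t}(u,v)\textbf{e}(0)\rangle = 0$ for all $0\le s\le t$ and all $u,v\in\mathbf h$. Using the decomposition \eqref{vst-1}, divide by $t-s$ and let $t\downarrow s$: by Lemma \ref{lemma-vst-1}(iii) the contributions of $\zeta$ and $\varsigma$ vanish in the limit against any $\phi\in\Gamma$, and $\rho(s,t,u,v)\textbf{e}(0)$ lands in the zero-particle sector, on which $\phi$ is already orthogonal. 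Hence we are left with $\lim_{t\downarrow s}\langle\phi,\gamma(s,t,u,v)\rangle = \langle \phi^{(1)}(s),\eta_s(u,v)\rangle = 0$ for almost every $s$. Since by Theorem \ref{noise} the vectors $\{\eta_s(u,v): u,v\in\mathbf h\}$ span $\mathbf k_s$, this forces $\phi^{(1)}(s) = 0$ for a.e.\ $s$, i.e.\ $\phi^{(1)} = 0$.

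For part (ii), I would argue by induction on $n$, the base cases $n=0,1$ being part (i). Assume $\phi^{(k)} = 0$ for all $k < n$. Apply \eqref{ortho} to $\psi = V_{\underbar s,\underbar t}(\underbar u,\underbar v)\textbf{e}(0)$ with $\underbar s\in\Sigma_n$, disjoint intervals $[s_k,t_k)$, and divide by $\prod_k (t_k - s_k)$. Writing each factor $\frac{1}{t_k-s_k}(V_{s_k,t_k}-1)(u_k,v_k) = \rho(s_k,t_k,u_k,v_k) + \gamma(s_k,t_k,u_k,v_k) + M(s_k,t_k,u_k,v_k)\textbf{e}(0)$ and expanding the product, I split the resulting sum of $3^n$ terms into three classes: the term that is a pure product of $\gamma$'s; terms containing at least one $M$-factor; and terms containing at least one $\rho$-factor but no $M$-factor. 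By Lemma \ref{Mst}(i), every term with an $M$-factor vanishes in the limit $\underbar t\downarrow\underbar s$ against $\phi$. A term with a $\rho$-factor (and the rest $\gamma$'s and $\rho$'s) produces a vector lying in a symmetric tensor sector of strictly fewer than $n$ particles — because $\rho(s_k,t_k,u_k,v_k)$ is a scalar multiple of $\textbf{e}(0)$, contributing no particle — and $\phi$ is orthogonal to all such sectors by the inductive hypothesis. Hence only the pure-$\gamma$ term survives, and by Lemma \ref{Mst}(ii),
\[
\lim_{\underbar t\downarrow\underbar s}\Big\langle\phi,\otimes_{k=1}^n\gamma(s_k,t_k,u_k,v_k)\Big\rangle
=\big\langle\phi^{(n)}(s_1,\dots,s_n),\,\eta_{s_1}(u_1,v_1)\otimes\cdots\otimes\eta_{s_n}(u_n,v_n)\big\rangle = 0
\]
for almost every $(s_1,\dots,s_n)\in\Sigma_n$ and all $u_k,v_k\in\mathbf h$. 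Since the elementary tensors $\eta_{s_1}(u_1,v_1)\otimes\cdots\otimes\eta_{s_n}(u_n,v_n)$ span a dense subspace of $\mathbf k_{s_1}\otimes\cdots\otimes\mathbf k_{s_n}$, and these fibers exhaust $L^2(\Sigma_n)\otimes\mathbf k^{\otimes n}$ as $\underbar s$ varies, we conclude $\phi^{(n)} = 0$, completing the induction.

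Part (iii) is then immediate: a vector $\phi\in\Gamma$ orthogonal to $\mathcal S'$ has $\phi^{(n)} = 0$ for every $n\ge 0$ by part (ii), hence $\phi = 0$; therefore $\mathcal S'$ is total in $\Gamma(\mathcal K)$. I expect the main obstacle to be the bookkeeping in part (ii): one must justify interchanging the limit $\underbar t\downarrow\underbar s$ with the inner product (handled by the uniform bounds in Lemma \ref{lemma-vst-1}(ii) and Lemma \ref{Mst}, which make the families uniformly bounded in $\underbar s,\underbar t\le\tau$), and one must be careful that the ``fewer than $n$ particles'' claim for the $\rho$-containing terms is correct — each $\rho$-factor genuinely drops the particle count by one relative to a $\gamma$-factor, so any term with at least one $\rho$ and no $M$ lies in $\bigoplus_{k<n}L^2(\mathbb R_+,\mathbf k)^{\otimes_s k}$, on which the inductive hypothesis applies. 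The measurability/density statement for the fibered tensor products, needed to pass from ``orthogonal to all $\eta_{s_1}(u_1,v_1)\otimes\cdots$'' to ``$\phi^{(n)}=0$'', follows from the measurable-field structure of $\{\mathbf k_s\}$ established after Theorem \ref{noise}.
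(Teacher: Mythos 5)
Your proposal follows essentially the same approach as the paper: part (i) is proved by pairing $\phi$ against $V_{s,s}(u,v)\textbf{e}(0)$ and against $\frac{1}{t-s}[V_{s,t}-1](u,v)\textbf{e}(0)$ in the limit $t\downarrow s$, and the inductive step for $\phi^{(n)}=0$ uses the decomposition \eqref{vst-1} on each disjoint interval together with Lemma \ref{Mst}. The paper actually compresses the inductive step into the single sentence ``by a similar argument as in \cite{SSS},'' so your explicit $3^n$-term expansion (pure-$\gamma$, $M$-containing, and $\rho$-containing classes, with the $\rho$-terms annihilated by the inductive hypothesis because each $\rho$-factor drops a particle) is a legitimate fill-in of what the paper leaves implicit, and your remark that a term with $a$ $\gamma$-factors and at least one $\rho$-factor (and no $M$) lies purely in the $a$-particle sector with $a\le n-1$ is the right way to make the inductive hypothesis bite.

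Three small corrections of bookkeeping rather than of substance. First, $\textbf{e}(0)$ itself is not in $\mathcal{S}'$ as defined (which requires $n\ge 1$); the paper instead takes $\psi=V_{s,s}(u,v)\textbf{e}(0)=\langle u,v\rangle\textbf{e}(0)\in\mathcal{S}'$, which gives $\phi^{(0)}=0$ after choosing $\langle u,v\rangle\ne 0$. Second, you attribute the vanishing of $\langle\phi,\varsigma(s,t,u,v)\rangle$ to Lemma \ref{lemma-vst-1}(iii), but that part only treats $\zeta$ and $\gamma$; the $\varsigma$ contribution dies because of the norm estimate $\|\varsigma(s,t,u,v)\|\le\sqrt{C_\tau''|t-s|}\,\|u\|\,\|v\|$ in Lemma \ref{lemma-vst-1}(ii). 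Third, Lemma \ref{Mst}(i) as proved in the paper covers only the all-$M$ product, whereas your expansion also produces mixed terms with some $\gamma$-factors and at least one $M$-factor; showing these also vanish in the limit against a fixed $\phi$ requires a modest extension of the argument in Lemma \ref{Mst} (complicated by the fact that $\gamma_k$ is not uniformly norm-bounded as $t_k\downarrow s_k$). The paper itself sidesteps this by deferring to \cite{SSS}, so this is a gap shared with the source, not an error introduced by your route.
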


\begin{proof}
(i) For any $s\ge 0$, $V_{s,s}=1_{\mathbf{h}\otimes \Gamma}$ and so,
in particular, (\ref{ortho}) gives, for any $u,v\in \mathbf{h}$,
\[
0=\left\langle \phi, V_{s,s}(u,v)  \textbf{e}(0)\right\rangle
=\left\langle u,v\right\rangle
\overline{\phi^{(0)}}
\]
and hence $\phi^{(0)}=0$.

(ii) By \eqref{ortho}, $\left\langle
\phi,\left[V_{s,t}-1\right](u,v)\textbf{e}(0)\right\rangle=0$ for
any $0\le s < t\le \tau<\infty$ and $u,v\in\mathbf{h}$. By HP
equation \eqref{hpeqn,st11} and part (iii) of  Lemma
\ref{lemma-vst-1} , we have
\begin{align*}
0
 &=\lim_{t\downarrow s}\frac{1}{t-s}\left\langle \phi, [V_{s,t}-1] (u,v) \textbf{e}(0)\right\rangle \\
 &=\sum_{j\ge 1}\left\langle u,L_j(s)v \right\rangle \overline{\phi^{(1)}_j(s)}\\
 &=\langle \phi^{(1)} (s),
\eta_s(u,v)\rangle.
\end{align*}
So $\left\langle \phi^{(1)}(s),\eta_s(u,v)\right\rangle=0$ for any
$u,v\in \mathbf{h}$ for almost every  $s$. Since
$\{\eta_s(u,v):u,v\in\mathbf{h}\}$ is total in $\mathbf{k}_s$, it
follows that $\phi^{(1)}(s)=0\in\mathbf{k}_s$ for almost every $0\le
s\le \tau,$  i.e,  $\phi^{(1)}=0.$

(iii) We prove this by induction. The result is already proved for
$n=0,1$. For $n\ge 2$, assume as induction hypothesis that for all
$m\le n-1$, $\phi^{(m)}( \underbar{{\it s}})=0$, for almost every
$\underbar{{\it s}}\in \Sigma_m$ ($s_i\le \tau$ for
$i=1,2,\cdots,m$). To show that $\phi^{(n)}=0,$ we note that by a
similar argument as in \cite{SSS},
\[
\left\langle\phi^{(n)}(s_1,s_2,\cdots, s_n), \eta_{s_1}(u_1,v_1)
\otimes\cdots \otimes \eta_{s_n}(u_n,v_n) \right\rangle=0.
\] for almost every
$\underbar{{\it s}}\in \Sigma_n$ ($s_i\le \tau$).
Since $\{ \eta_s(u,v):u,v\in \mathbf{h}\}$ is total in
$\mathbf{k}_s$, it follows that $\phi^{(n)}(s_1,s_2,\cdots,s_n)=0\in
\mathbf k_{s_1}\otimes\cdots\otimes\mathbf{k}_{s_n}$ for almost
every $(s_1,s_2,\cdots, s_n)\in \Sigma_n$.
\end{proof}
%

\subsection{Unitary Equivalence}

We shall now prove (ii) in Theorem \ref{mainthm}
that the unitary evolution $\{U_{s,t}\}$ on $\mathbf{h} \otimes\mathcal{H}$
is unitarily equivalent to the unitary solution $\{V_{s,t}\}$ of HP equation \eqref{hpeqn,st11}.
To prove this we need the following two results.

\begin{lemma}
Let
$U_{\underbar{s},\underbar{t}}(\underbar{u},\underbar{v})\Omega$ and
$U_{\underbar{s}^\prime,\underbar{t}^\prime}(\underbar{p},\underbar{w})\Omega$
be in $\mathcal{S}$, where
$\underbar{v},\underbar{z}\in\mathbf{h}^{\otimes n}$. Then there
exist an integer $m\ge 1$, $\underbar{a}=(a_1,a_2,\cdots,a_m)$,
$\underbar{b}=(b_1,b_2, \cdots, b_m)$ with $0\le a_1\le b_1\le
\cdots\le a_m\le  b_m< \infty$, partition  $R_1\cup R_2\cup
R_3=\{1,\cdots, m\}$ with $|R_i|=m_i$, family of vectors
$x_{k_l},g_{k_i}\in\mathbf{h}$ and $y_{k_l},h_{k_i}\in\mathbf{h}$
for $l\in R_1 \cup R_2$ and $i\in R_2 \cup R_3$ such that
\begin{align*}
U_{\underbar{s},\underbar{t}}(\underbar{u},\underbar{v})
&=\sum_{\underbar{k}} \prod_{l\in R_1 \cup R_2} U_{a_l,b_l}
(x_{k_l},y_{k_l}), \\
 U_{\underbar{s}^\prime,\underbar{t}^\prime}(\underbar{p},\underbar{w})
&=\sum_{\underbar{k}} \prod_{l\in R_2 \cup R_3} U_{a_l,b_l}
(g_{k_l},h_{k_l}).
\end{align*}
\end{lemma}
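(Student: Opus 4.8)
The statement asserts that two products of the form $U_{\underbar{s},\underbar{t}}(\underbar{u},\underbar{v})$ and $U_{\underbar{s}',\underbar{t}'}(\underbar{p},\underbar{w})$ can be rewritten, after a common refinement of the time intervals, as sums of products of single-increment operators $U_{a_l,b_l}(\cdot,\cdot)$ indexed over a common subdivision, with the index set $\{1,\dots,m\}$ split into three blocks: $R_1$ (intervals used only by the first product), $R_2$ (intervals shared by both), and $R_3$ (intervals used only by the second). The key algebraic tool is part (iv) of Lemma \ref{Auv}, namely $AB(u,v)=\sum_{j\ge1}A(u,e_j)B(e_j,v)$, together with the evolution property (A1): for $a\le c\le b$ one has $U_{a,b}=U_{a,c}U_{c,b}$, hence $U_{a,b}(u,v)=\sum_{j}U_{a,c}(u,e_j)U_{c,b}(e_j,v)$. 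Iterating this lets one break any single factor $U_{s_k,t_k}(u_k,v_k)$ into a sum of products of $U$-factors over any finite set of subintervals of $[s_k,t_k)$.

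\textbf{Step 1: build the common refinement.} I would first collect all the endpoints $s_1,t_1,\dots,s_n,t_n,s_1',t_1',\dots$ appearing in the two time-vectors, together with the constraint that the resulting intervals are either disjoint or nested as the two orderings demand. Sorting these endpoints produces points $0\le a_1\le b_1\le a_2\le\cdots\le a_m\le b_m<\infty$ such that every interval $[s_k,t_k)$ and every interval $[s_j',t_j')$ is a finite union of consecutive intervals $[a_l,b_l)$ from this list. (Where the original intervals overlap, one simply takes their union of cut-points; where a gap occurs, the intermediate $[a_l,b_l)$ is not used by that product, which is exactly what the partition $R_1\cup R_2\cup R_3$ records: $l\in R_1$ iff $[a_l,b_l)$ lies inside some $[s_k,t_k)$ but not in any $[s_j',t_j')$, $l\in R_2$ iff it lies in both, $l\in R_3$ iff only in some $[s_j',t_j')$.)

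\textbf{Step 2: expand each factor over the refinement.} For each $k$, write $[s_k,t_k)=[a_{p},b_{p})\cup[a_{p+1},b_{p+1})\cup\cdots\cup[a_{q},b_{q})$ (consecutive) and apply the evolution identity repeatedly: $U_{s_k,t_k}(u_k,v_k)=\sum U_{a_p,b_p}(u_k,e_{j_1})U_{a_{p+1},b_{p+1}}(e_{j_1},e_{j_2})\cdots U_{a_q,b_q}(e_{j_{r-1}},v_k)$, the sum running over the intermediate basis indices, which I would bundle into a single multi-index $\underbar{k}$. Doing this for all $k$ and collecting the multi-indices, $U_{\underbar{s},\underbar{t}}(\underbar{u},\underbar{v})=\sum_{\underbar{k}}\prod_{l\in R_1\cup R_2}U_{a_l,b_l}(x_{k_l},y_{k_l})$ where $x_{k_l},y_{k_l}$ are either original vectors $u_k,v_k$ (at the two ends of a block) or basis vectors $e_{j_i}$ (at internal junctions). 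The strong convergence of the sums is guaranteed by Lemma \ref{Auv}(iv). The same expansion applied to the primed product gives the second displayed formula with vectors $g_{k_l},h_{k_l}$ indexed by $l\in R_2\cup R_3$.

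\textbf{Main obstacle.} The only real subtlety is bookkeeping: one must choose the refinement so that the \emph{same} cut-points are used by both products on the overlap region $R_2$, and one must verify that the multi-index summation can be arranged so that a single summation symbol $\sum_{\underbar k}$ serves both formulas consistently (i.e.\ the shared-interval indices line up). This is purely combinatorial — no analysis beyond the strong convergence already in Lemma \ref{Auv}(iv) — so I would state it carefully but not belabor it, noting that it is the natural multi-interval generalization of the identity $AB(u,v)=\sum_j A(u,e_j)B(e_j,v)$ and was essentially carried out in \cite{SSS}.
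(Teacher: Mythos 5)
Your proposal is correct and takes exactly the route the paper intends: the paper's proof consists of the single observation that by the evolution property and Lemma~\ref{Auv}(iv) one has $U_{s,t}(u,v)=\sum_{j\ge1}U_{s,r}(u,f_j)U_{r,t}(f_j,v)$, leaving the common-refinement bookkeeping implicit, which you spell out. Your account is simply a more detailed version of the same argument.
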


\begin{proof}
It follows from the evolution hypothesis of $\{U_{s,t}\}$ that for $r\in[s,t]$ and
a complete orthonormal basis $\{f_j\}\in\mathbf{h}$ we can write
$U_{s,t}(u,v)=\sum_{j\ge 1} U_{s,r}(u,f_j)U_{r,t}(f_j,v)$.
\end{proof}

\begin{remark}\label{V}
Since the family of unitary operators $\{V_{s,t}\}$ on $\mathbf{h}\otimes\Gamma$
enjoy all the properties satisfy by family of unitary operators $\{U_{s,t}\}$
on $\mathbf{h}\otimes\mathcal{H}$, the above lemma also hold if we replace $U_{s,t}$ by $V_{s,t}$.
\end{remark}

\begin{lemma}\label{uvinner}
For $U_{\underbar{s},\underbar{t}}(\underbar{u},\underbar{v})
\Omega,
U_{\underbar{s}^\prime,\underbar{t}^\prime}(\underbar{p},\underbar{w})\Omega\in\mathcal{S}$,
we have
\begin{equation}\label{uvinn}
\left\langle
U_{\underbar{s},\underbar{t}}(\underbar{u},\underbar{v}) \Omega,
U_{\underbar{s}^\prime,\underbar{t}^\prime}(\underbar{p},\underbar{w})\Omega\right\rangle
=\left\langle
V_{\underbar{s},\underbar{t}}(\underbar{u},\underbar{v})\textbf{e}(0),
V_{\underbar{s}^\prime,\underbar{t}^\prime}(\underbar{p},\underbar{w})\textbf{e}(0)
\right\rangle.
\end{equation}
\end{lemma}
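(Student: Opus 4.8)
The plan is to reduce the inner product of two elements of $\mathcal{S}$ (resp.\ $\mathcal{S}'$) to a finite product of ``vacuum--expectation'' quantities of the form $\langle\Omega, U_{\underbar{a},\underbar{b}}^{(\underline{\epsilon})}(\cdot,\cdot)\Omega\rangle$ built out of the single evolution, and then to observe that, by the constructions of Sections 3--5, these vacuum expectations are the \emph{same} functions of the data for the $U$-process and for the $V$-process. First I would invoke the previous lemma (and Remark \ref{V}) to bring both $U_{\underbar{s},\underbar{t}}(\underbar{u},\underbar{v})$ and $U_{\underbar{s}',\underbar{t}'}(\underbar{p},\underbar{w})$ (and likewise the $V$-versions) onto a common refined grid $0\le a_1\le b_1\le\cdots\le a_m\le b_m<\infty$ with a common partition $R_1\cup R_2\cup R_3=\{1,\dots,m\}$, so that
\[
\left\langle U_{\underbar{s},\underbar{t}}(\underbar{u},\underbar{v})\Omega,
 U_{\underbar{s}',\underbar{t}'}(\underbar{p},\underbar{w})\Omega\right\rangle
=\sum_{\underbar{k},\underbar{k}'}\left\langle\Omega,
 \Big(\prod_{l\in R_1\cup R_2}U_{a_l,b_l}(x_{k_l},y_{k_l})\Big)^{\!*}
 \Big(\prod_{l\in R_2\cup R_3}U_{a_l,b_l}(g_{k'_l},h_{k'_l})\Big)\Omega\right\rangle .
\]

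Next I would use the commutation relation in \textbf{(A2)(i)} together with Lemma \ref{Auv}(iii) to reorder the operators block-by-block so that all factors carrying a given index $l$ are grouped together, and then apply the factorization of vacuum expectations over disjoint time intervals in \textbf{(A2)(ii)} to split the big vacuum expectation into a product over $l=1,\dots,m$. The $l\in R_1$ blocks contribute factors $\langle\Omega,U_{a_l,b_l}^{*}(\cdot,\cdot)\Omega\rangle=\overline{\langle\cdot,T_{a_l,b_l}\cdot\rangle}$, the $l\in R_3$ blocks contribute $\langle\Omega,U_{a_l,b_l}(\cdot,\cdot)\Omega\rangle=\langle\cdot,T_{a_l,b_l}\cdot\rangle$, and the ``overlap'' blocks $l\in R_2$ contribute factors of the form $\langle U_{a_l,b_l}(\cdot,\cdot)\Omega,U_{a_l,b_l}(\cdot,\cdot)\Omega\rangle=\langle\cdot,Z_{a_l,b_l}(|\cdot><\cdot|)\cdot\rangle$, using \eqref{Ztsimple}. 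Running the identical argument for the $V$-process (valid by Remark \ref{V}) yields the same combinatorial expression but with $\widetilde{T}_{a_l,b_l}$ and $\widetilde{Z}_{a_l,b_l}$ in place of $T_{a_l,b_l}$ and $Z_{a_l,b_l}$.

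Finally I would close the argument by quoting the already-established identifications $\widetilde{T}_{s,t}=T_{s,t}$ (Lemma \ref{Vstbasic1}(iii)) and $\widetilde{Z}_{s,t}=Z_{s,t}$ (the lemma immediately following it): since each factor in the two product expansions agrees and the index sets $\underbar{k},\underbar{k}'$ and partitions are the same, the two sums are equal term by term, which is exactly \eqref{uvinn}. I expect the main obstacle to be purely bookkeeping: carefully justifying the reordering of the noncommuting factors $U_{a_l,b_l}(\cdot,\cdot)$ on the refined grid using only \textbf{(A2)(i)} and Lemma \ref{Auv}(iii)--(iv), and checking that the same reshuffling is legitimate for $\{V_{s,t}\}$ via Lemma \ref{Vstbasic1}(ii); once the vacuum expectation has been factored over the grid, the reduction to $T$ and $Z$ and hence the conclusion is immediate. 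One should also take a little care that the series arising from the resolutions of identity $U_{s,t}(u,v)=\sum_j U_{s,r}(u,f_j)U_{r,t}(f_j,v)$ converge strongly (Lemma \ref{Auv}(iv)), so that the interchange of these sums with the inner product is valid; this is routine since all operators involved are uniform contractions on the relevant Hilbert spaces.
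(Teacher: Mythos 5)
Your proposal is correct and takes essentially the same route as the paper: the paper's own proof displays only the single-interval case and defers the bookkeeping to \cite{SSS}, but the underlying mechanism it invokes is exactly what you spell out, namely the refined-grid decomposition from the preceding lemma plus the factorization of vacuum expectations over disjoint intervals, reducing everything to $T_{s,t}$ and $Z_{s,t}$ and then quoting $\widetilde{T}_{s,t}=T_{s,t}$ and $\widetilde{Z}_{s,t}=Z_{s,t}$.
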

\begin{proof}
The proof of (\ref{uvinn}) is very similar to that in  \cite{SSS}.
In fact,  for  $$0\le s\le t<\infty, \left\langle
U_{s,t}(u,v)\Omega,U_{s,t}(p,w)\Omega\right\rangle=\left\langle
p,Z_{s,t}(|w><v|) u \right\rangle $$   while $$
 \left\langle V_{s,t} (u,v)\textbf{e}(0),V_{s,t}(p,w)\textbf{e}(0)\right\rangle=\left\langle p,
 \widetilde{Z}_{s,t}(|w><v|)u\right\rangle$$ but
 $\widetilde{Z}_{s,t}=Z_{s,t}.$
 \end{proof}

Now defining  a map  $\Xi: \mathcal H \rightarrow \Gamma$ by sending
$U_{\underbar{{\it s}},\underbar{{\it t}}}(\underbar{{\it
u}},\underbar{{\it v}}) \Omega \in \mathcal S$  to
$V_{\underbar{{\it s}},\underbar{{\it t}}}(\underbar{{\it
u}},\underbar{{\it v}}) \textbf{e}(0)\in \mathcal S^\prime,$  as in
\cite{SSS}, we can establish   unitary equivalence  of HP evolution
$V_{s,t}$ with the evolution   $U_{s,t}$ we  started with.

\section{Appendix}
Let $X$  be a complex separable Banach space.  Consider the Banach
space
\[
\widehat{X}=L^1(\mathbb R_+, X)=\left\{ f:\mathbb R_+ \rightarrow X
~\mbox{a Lebesgue measurable}, \|f\|:=\int_{\mathbb R_+} \|f(\tau)\|
d\tau< \infty\right\}
\]
and define shift operators $U_t$ on $\widehat{X}$ given by
\[
U_t  f(\tau):=\left\{
                  \begin{array}{ll}
                    0 & \hbox{if}~~ \tau <  t, \\
                    f(\tau-t) & \hbox{if}~~ \tau\ge  t.
                  \end{array}
                \right.
\]
Then for each $t\ge 0, U_t$  is an isometry and $\{U_t\}$ is a
strongly continuous semigroup with generator $P=-\frac{d}{dt}$ with
domain
\[
\mathcal D(P)=\left\{f\in\widehat{X}:  f(0)=0, f
   ~\mbox{is absolutely continuous},  f'\in\widehat{X}\right\}.
\]
The adjoint of this semigroup is given by  $U_t^*f(\tau)=f(\tau+t)$.

Let$\{S_{s,t}: 0\le s\le t<\infty\}$ be an evolution of  contraction
operators in $\B(X)$. With further conditions on $S_{s,t} $ we have
the following result

\begin{theorem} \label{StG}
Let  $\{S_{s,t}: 0\le s\le t<\infty\}$ be a contractive  evolution
in $\B(X)$  such that $ \|S_{s,t}-1\|\le C|t-s|$, where  $C$ is
independent of $s,t.$ Then there exists a Lebesgue measurable
function $G:\mathbb R_+\rightarrow \B(X)$ such that  G is  locally
essentially norm
 bounded  and   $$S_{s,t}-1=\int_s^t S_{s, \tau}  G(\tau)  d \tau.$$
\end{theorem}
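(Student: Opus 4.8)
The Appendix has already put in place the tool I would use, namely the shift semigroup on $\widehat X=L^1(\mathbb R_+,X)$; the plan is to fold the evolution $\{S_{s,t}\}$ into a single contractive $C_0$-semigroup on $\widehat X$ that is a small bounded perturbation of the shift, read off its generator, and unwind. First observe that $\|S_{s,t}-1\|\le C|t-s|$ together with the evolution law makes $(s,t)\mapsto S_{s,t}$ jointly norm-Lipschitz: for $s\le s'\le t\le t'$ one has $S_{s,t}-S_{s',t}=(1-S_{s,s'})S_{s',t}$ and $S_{s,t'}-S_{s,t}=S_{s,t}(S_{t,t'}-1)$. Define, for $h\ge0$, the operator $\mathcal S_h\in\B(\widehat X)$ by $(\mathcal S_h f)(\tau):=S_{\tau,\tau+h}f(\tau+h)$. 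The cocycle identity $S_{\tau,\tau+h_1}S_{\tau+h_1,\tau+h_1+h_2}=S_{\tau,\tau+h_1+h_2}$ gives $\mathcal S_{h_1}\mathcal S_{h_2}=\mathcal S_{h_1+h_2}$; contractivity of the $S_{s,t}$ gives $\|\mathcal S_h f\|_1\le\|f\|_1$; and, writing $U^*_h$ for the left shift $(U^*_hf)(\tau)=f(\tau+h)$ of the Appendix, the elementary bound $\|\mathcal S_h-U^*_h\|\le\sup_\tau\|S_{\tau,\tau+h}-1\|\le Ch$ together with the strong continuity of the shift gives the strong continuity of $\{\mathcal S_h\}$. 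Thus $\{\mathcal S_h\}_{h\ge0}$ is a contractive $C_0$-semigroup on $\widehat X$ with $\|\mathcal S_h-U^*_h\|\le Ch$ for $0\le h\le1$.

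\emph{Identifying the generator.} Let $A$ be the generator of $\{\mathcal S_h\}$ and $D$ that of $\{U^*_h\}$. The crux is to show $A=D+B$ with $B\in\B(\widehat X)$ and $\|B\|\le C$. From the estimate above, Laplace transformation yields $\|R(\lambda,A)-R(\lambda,D)\|=O(\lambda^{-2})$ as $\lambda\to\infty$, and standard bounded-perturbation theory for $C_0$-semigroups then forces $D(A)=D(D)$ and, for $f$ in this common domain, $Bf:=(A-D)f=\lim_{h\downarrow0}h^{-1}(\mathcal S_h-U^*_h)f$, whence $\|B\|\le C$. Moreover $B$ is a multiplication operator: since $\mathcal S_hM_\varphi=M_{\varphi(\cdot+h)}\mathcal S_h$ and $U^*_hM_\varphi=M_{\varphi(\cdot+h)}U^*_h$ for every scalar $\varphi\in L^\infty(\mathbb R_+)$ (here $M_\varphi$ is multiplication by $\varphi$), in the limit $B$ commutes with all $M_{1_E}$, so $(Bf)(\tau)=G(\tau)f(\tau)$ for a strongly measurable $G:\mathbb R_+\to\B(X)$ with $\|G(\tau)\|\le\|B\|\le C$ a.e.\ (separability of $X$ being used for the measurability of $G$). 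In particular $G$ is Lebesgue measurable and, globally hence locally, essentially norm bounded.

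\emph{Recovering the integral equation.} With $A=D+M_G$ and $M_G$ bounded, the Dyson/variation-of-constants formula gives $\mathcal S_h=U^*_h+\int_0^h\mathcal S_{h-r}M_GU^*_r\,dr$ in $\B(\widehat X)$. Applying both sides to $f=1_{[0,N]}\otimes x$ and evaluating at a point $\tau\in(0,N-h)$, where all three terms have continuous representatives in $\tau$, gives, after the substitution $\sigma=\tau+h-r$,
\[
S_{\tau,\tau+h}x=x+\int_\tau^{\tau+h}S_{\tau,\sigma}G(\sigma)x\,d\sigma ,
\]
first for a.e.\ $\tau$ and then for all $\tau$ by joint continuity; since $x\in X$, $N$ and $h>0$ are arbitrary, this is exactly $S_{s,t}-1=\int_s^tS_{s,\sigma}G(\sigma)\,d\sigma$ for all $0\le s\le t<\infty$, which is the assertion of the theorem.

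\emph{Where the difficulty lies.} The Lipschitz hypothesis does \emph{not} allow one to differentiate $t\mapsto S_{s,t}$ directly, since $\B(X)$ need not have the Radon--Nikodym property; the whole purpose of passing to $\widehat X$ is to trade this for the assertion that the generator of $\{\mathcal S_h\}$ is a bounded perturbation of the shift generator, and proving that (equivalently, $D(A)=D(D)$) from $\|\mathcal S_h-U^*_h\|\le Ch$ is the main obstacle. Once it is in hand, recognizing $B$ as a multiplication operator and running Dyson's formula are routine.
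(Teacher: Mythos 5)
Your reduction to the shift semigroup on $\widehat X=L^1(\mathbb R_+,X)$ is the same device the paper uses, and your later steps (identifying $B$ as a multiplication operator via commutation with scalar multiplications, then applying variation of constants and evaluating pointwise) are clean and would work. The difficulty is precisely the step you yourself flag as ``the main obstacle,'' and the justification you offer for it does not hold up.

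The claim is that $\|R(\lambda,A)-R(\lambda,D)\|=O(\lambda^{-2})$ forces $D(A)=D(D)$ with $A-D$ bounded. This is not ``standard bounded-perturbation theory.'' Perturbation theory runs the other way: if $D$ generates a $C_0$-semigroup and $B$ is bounded, then $D+B$ generates one and the resolvent difference is $O(\lambda^{-2})$. The converse does not follow from the resolvent asymptotics alone. Indeed, for $x\in D(D)$, put $y_\lambda=R(\lambda,A)(\lambda-D)x$; then $y_\lambda\to x$ and $Ay_\lambda=\lambda^2 Q_\lambda x-\lambda Q_\lambda Dx+Dx$ where $Q_\lambda=R(\lambda,A)-R(\lambda,D)$. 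The family $\lambda^2 Q_\lambda x$ is \emph{bounded}, but on $\widehat X=L^1$, boundedness gives neither norm nor weak sequential convergence, so you cannot conclude $x\in D(A)$ by closedness. Unwinding $\lambda^2Q_\lambda x=\int_0^\infty s e^{-s}\,\frac{(\mathcal S_{s/\lambda}-U^*_{s/\lambda})x}{s/\lambda}\,ds$ shows that the limit you need is exactly $\lim_{h\downarrow 0}h^{-1}(\mathcal S_h-U^*_h)x$, i.e.\ the argument is circular. (The shift semigroup on $L^1$ is not analytic and $L^1$ lacks the Radon--Nikodym property, which is where the obstruction lives.)

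By contrast, the paper never asserts $\mathcal D(\mathcal G)=\mathcal D(P)$. It starts from $f\in\mathcal D(\mathcal G)$, for which the difference quotient converges in $\widehat X$-norm by definition of the generator, extracts a subsequence for a.e.\ pointwise convergence, and then, for $f\in\mathcal D(\mathcal G)\cap\mathcal D(P)$, uses the uniform bound $\|(S_{\tau,\tau+t}-1)/t\|\le C$ to isolate the middle term and define $G(\tau)f(\tau)$ pointwise. Boundedness of $G(\tau)$ is then obtained from closability of $\mathcal G$ and the closed graph theorem, not from a bounded-perturbation identity. Finally, the paper recovers the integral equation directly from the evolution identity
\[
\frac{1}{\sigma}\int_{t+\beta}^{t+\beta+\sigma}S_{t,\tau}\,d\tau-\frac{1}{\sigma}\int_t^{t+\sigma}S_{t,\tau}\,d\tau
=\int_t^{t+\beta}S_{t,\tau}\,\frac{S_{\tau,\tau+\sigma}-1}{\sigma}\,d\tau
\]
rather than via Dyson's formula, which sidesteps the need to have first established $A=D+M_G$ as an operator identity on a common domain. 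If you want to keep your route, you would need to actually establish that $\mathcal D(\mathcal G)$ meets $\mathcal D(P)$ in a sufficiently large set and prove convergence of $h^{-1}(\mathcal S_h-U^*_h)f$ for such $f$; the resolvent estimate by itself does not do this.
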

\begin{proof}
Consider the family  of operators  $\mathcal S_t$ in $\widehat{X}$
define  by
\[
\mathcal  S_t  f (\tau)=S_{\tau, \tau+t} f(\tau+t)=S_{\tau, \tau+t}
U_t^*f(\tau).
\]
Then $\mathcal{S}_t$  is a contractive strongly continuous semigroup
on $\widehat{X}$. To prove the contractivity, for $t\ge 0$, consider
\[
\|\mathcal  S_t  f \|\le \int_{\mathbb R_+}\|S_{\tau, \tau+t}
   f(\tau+t)\| d\tau=
   \int_{\mathbb R_+}\|
   f(\tau+t)\| d\tau\le  \|f\|.
\]
Also, we have the semigroup property:
\begin{align*}
\mathcal{S}_t\mathcal{S}_s f(\tau)
 &=S_{\tau, \tau+t}(\mathcal S_s f)(\tau+t)
   =S_{\tau, \tau+t}  S_{\tau+t,\tau+t+s} f(\tau+t+s)\\
 &=S_{\tau, \tau+t+s}  f(\tau+t+s)
   =\mathcal{S}_{t+s}f(\tau).
\end{align*}
To prove strong continuity of  $\mathcal S_t$, we first consider,
for $g\in \mathcal C_0^\infty(\mathbb R_+,X)$
\begin{align*}
\|(\mathcal S_{t}-1)g\| &=\int _{\mathbb R_+ }\|(\mathcal
S_{t}-1)g(\tau)\|d\tau
   =\int _{\mathbb R_+ }\|(S_{\tau, \tau+t}U_t^*-1)g(\tau)\| d\tau\\
&\le \int _{\mathbb R_+ }\|(S_{\tau, \tau+t}-1)U_t^*g(\tau)\| d\tau
      +\int _{\mathbb R_+ } \|g(t+\tau)-g(\tau)\| d\tau\\
&=C\|g\|t+\int _{\mathbb R_+ } \|g(t+\tau)-g(\tau)\| d \tau.
\end{align*}
We can use  dominated convergence theorem to take limit of the
second  term as $g$ is compactly supported continuous function and
concluded that $\|(\mathcal S_{t}-1)g\|$ converges to  $0$. The
$\mathcal{S}_t$ is strongly continuous follows from density of
$\mathcal{C}_0^\infty(\mathbb R_+,X)$. So there exists a densely
defined, closed maximally  dissipative operator $\mathcal{G}$ which
is the generator of  the semigroup $\mathcal{S}_t$. Thus we have
\[
\mathcal{G}f=\lim_{t\rightarrow 0} \frac{\mathcal{S}_{t}f-f}{t}
\]
in $\widehat{X}$-norm for each  $f\in \mathcal{D}(\mathcal{G})$ and
hence there exists a sequence $t_n$  tending to  $0$  such that for
almost every $\tau$
\begin{align*}
\mathcal{G}f(\tau)
 &=\lim_{t_n\rightarrow 0}\frac{\mathcal S_{t_n}f(\tau)-f(\tau)}{t_n}\\
 &=\lim_{t_n\rightarrow 0} \frac{S_{\tau,\tau+t_n}
  f(\tau+t_n)-f(\tau)}{t_n} \\
 &=\lim_{t_n\rightarrow 0} \left\{\frac{(S_{\tau,\tau+t_n}-1)
  (f(\tau+t_n)-f(\tau))}{t_n} +\frac{(S_{\tau,\tau+t_n}-1)
  f(\tau)}{t_n} +\frac{f(\tau+t_n)-f(\tau)}{t_n}\right\}.
\end{align*}
Since $\left\|\frac{(S_{\tau,\tau+t_n}-1)}{t_n}\right\|\le C, $ for
$f\in  \mathcal D(\mathcal G)\cap \mathcal D(P),$
$\lim_{t_n\rightarrow 0}\frac{\mathcal
S_{t_n}f(\tau)-f(\tau)}{t_n}=\mathcal Gf (\tau)-Pf(\tau)$ for almost
every $\tau$.  Define  it  to  be  $G(\tau)  f(\tau).$ On the other
hand, for any $t,\beta, \sigma\ge 0$
\[
\frac{1}{\sigma} \int_{t+\beta}^{t+\beta+\sigma}  S_{t, \tau }
d\tau- \frac{1}{\sigma} \int_{t}^{t+\sigma}  S_{t, \tau } d\tau=
 \int_{t}^{t+\beta}  S_{t,\tau} \frac{S_{\tau, \tau+\sigma }-1}{\sigma} d\tau.
\]
Therefore, we have $\lim_{\sigma\rightarrow 0} \int_{t}^{t+\beta}
S_{t,\tau} \frac{S_{\tau, \tau+\sigma }-1}{\sigma} d\tau
=S_{t,t+\beta}-1$ by  continuity. Note that the domain $
\mathcal{D}(\mathcal{G})$  contains $\mathcal{C}_0^\infty(\mathbb
R_+,X).$ Let $g\in\mathcal{C}_0^\infty(\mathbb R_+, X)$ such that
for any $t\in (t_1,t_2), g(t)=x$ for some  $x\in X$, such a  $g\in
\mathcal D(\mathcal G)\cap \mathcal D(P).$ Therefore, for almost
every   $t\in (t_1,t_2),$  \be \label{GG}G(t)x=\lim_{\beta
\rightarrow 0}\frac{S_{t,
 t+\beta}g(t)-g(t)}{\beta}= \lim_{\beta \rightarrow 0}\frac{1}{\beta}\int_{t}^{t+\beta}  S_{t,\tau}(\mathcal
 G g(\tau))
 d\tau=\mathcal Gg(t).\ee
Since we  have    $\frac{S_{t, t+\beta}x-x}{\beta}$ is continuous in
$t,$ in particular measurable  and as a point wise limit of
measurable functions $t\mapsto G(t)x$ is Lebesgue measurable.

To see the  boundedness of $G(t)$, consider the following.  By the
identity (\ref{GG}),  $G(t)$ is defined on whole of the Banach space
$X$. So it is enough to show that $G(t)$ is closable and use the
close graph theorem. Let $u_n$ converge to $0$ such that  $G(t)u_n$
converges to  $v.$ 
Let us consider a sequence of  vectors $g_n\in
\mathcal C_0^\infty(\mathbb R_+, X)$ taking value  $u_n$ in an
interval $(t_1,t_2)$ containing $t$  and converging to $0$  in
$\widehat{X}$.  We can choose $g_n\in \widehat{X}$ such
that $Pg_n$  and $\mathcal{G}g_n$   converges and $\mathcal G  g_n (t)=G(t) u_n$ for each $t\in
(t_1,t_2)$
and hence closability of $\mathcal{G}$ gives that  $\mathcal{G}g_{n_k}$,
for a sub sequence,  converges to  $0$ point wise. Since
$\mathcal{G}g_{n_k} (t)=G(t) u_{n_k}$, the limit $v=0$.  Therefore, $G(t)$ is closable and defined everywhere proving that it is bounded for almost all $t.$  Note that 

\begin{equation}\label{GGt}
G(t)x=\lim_{n \rightarrow\infty }\frac{S_{t,t+t_n}-1}{\beta}x
\end{equation}
and by Assumption  $\left\|\frac{S_{t,t+t_n}-1}{t_n}x\right\|\le
C\|x\|.$  Thus we have
\[
\left\|G(t\right\|\le \liminf_{n\rightarrow \infty }
\left\|\frac{S_{t,t+t_n}-1}{t_n}\right\| \le C.
\]
  \end{proof}

\bibliographystyle{amsplain}

\end{document}